\documentclass[11pt]{amsart}
\usepackage[english]{babel}
\usepackage{amsmath,amssymb,amsthm, amscd}
\usepackage[alphabetic]{amsrefs}
\usepackage{enumitem}
\usepackage[unicode,pdfencoding=auto,psdextra]{hyperref}
\usepackage{xcolor}  %%  to highlight comments during the writing-up phase
\usepackage{mathrsfs}
\usepackage{graphicx}

\newtheorem{theorem}{Theorem}[section]

\newtheorem{lemma}[theorem]{Lemma}
\newtheorem{proposition}[theorem]{Proposition}
\theoremstyle{definition}
\newtheorem{definition}[theorem]{Definition}

\theoremstyle{remark}
\newtheorem{remark}[theorem]{Remark}
\newtheorem{problem}[theorem]{Problem}
\numberwithin{equation}{section}

\newcommand{\C}{\mathbb{C}} % the complex numbers
\newcommand{\Z}{\mathbb{Z}}
\newcommand{\N}{\mathbb{N}}

\newcommand{\holo}{\mathcal{O}}
\DeclareMathOperator{\tr}{tr}
\DeclareMathOperator{\lie}{Lie}
\DeclareMathOperator{\aut}{Aut}
\DeclareMathOperator{\vf}{VF}

\title[Non-Runge FB Domains in Stein Manifolds with DP]%
{Non-Runge Fatou-Bieberbach Domains in Stein Manifolds with the Density Property}
\author{Gaofeng Huang \and Frank Kutzschebauch \and Feng Rong}

\address{Mathematisches Institut, Universit\"at Bern, Sidlerstrasse 5, CH--3012 Bern, Switzerland}

\email{gaofeng.huang@unibe.ch, frank.kutzschebauch@unibe.ch}

\address{School of Mathematical Sciences, Shanghai Jiao Tong University, 800 Dong Chuan Road, Shanghai 200240, P.R. China}

\email{frong@sjtu.edu.cn}

\thanks{The first two authors were partially supported by Schweizerische Nationalfonds Grant 200021-178730. The third author was partially supported by the National Natural Science Foundation of China (Grant No. 12271350).}

\subjclass[2020]{Primary 32M17; Secondary 32E30}

\keywords{Fatou-Bieberbach domain, density property, holomorphic convexity.}

\begin{document}

\begin{abstract}
    Let $X$ be a Stein manifold with the density property. We present methods of constructing two kinds of non-Runge Fatou-Bieberbach domains in $X$, which by definition are proper open subsets of $X$ biholomorphic either to $\mathbb{C}^n$ or to $X$. 
    For both kinds, we provide examples where our methods apply. 
\end{abstract}

\maketitle

\section{Introduction}

By Picard's Little Theorem, a non-constant holomorphic function $f: \C \to \C$ can avoid at most one value, in which case $f$ is an exponential function plus a constant and thus never injective. The situation is completely different in several complex variables. Indeed,  injective and non-surjective holomorphic maps $F: \C^n \to \C^n$ ($n \ge 2$) do exist. These maps are called Fatou-Bieberbach maps and their images $\Omega = F(\C^n) \subsetneq \C^n$ are called {\bf Fatou-Bieberbach domains}. The existence of these maps is called the  Poincar{\'e}-Fatou-Bieberbach phenomenon  and has been observed independently by Fatou, Bieberbach and Poincar{\'e}. 

There are two known methods to produce such maps and domains in $\C^n$. The first method was presented with a complete proof in Rosay-Rudin \cite{MR0929658}. For an automorphism of $\C^n$ with an attracting fixed point, its basin of attraction is biholomorphic to $\C^n$. In particular, if some points do not belong to the image (e.g.\@ there exists another fixed point), then the basin of attraction is a Fatou-Bieberbach domain. The second method is the so-called push-out method attributed to Dixon and Esterle \cite{MR0854551}. It is worth mentioning that a long-standing open problem in functional analysis, Michael's conjecture, was in that paper reduced to a question about the existence of certain sequences of Fatou-Bieberbach domains. 

In both methods, the Fatou-Bieberbach map is constructed as a limit of holomorphic automorphisms of $\C^n$. In other words, the constructions use the fact that $\C^n$ ($n \ge 2$) has an enormously big group of holomorphic automorphisms. Generalizing from $\C^n$, an interesting class of Stein manifolds with a big automorphism group has been introduced by Varolin \cite{MR1829353}, namely Stein manifolds with the density property (see Definition \ref{def: DP}). In \cite{MR1785520}, Varolin observed that the Poincar{\'e}-Fatou-Bieberbach phenomenon exists for any  Stein manifold $X$ with the density property in two ways similar to the case of $\C^n$. The first method of basin of attraction produces biholomorphic images of $\C^n$, $n= \dim_\C X$, in $X$, which we call {\bf Fatou-Bieberbach domains of the first kind}. The second method, the push-out method,  produces biholomorphic images of $X$ in itself different from $X$, which we call {\bf Fatou-Bieberbach domains of the second kind}. 

By their very construction, being domains of convergence of a sequence of holomorphic automorphisms of $X$, both methods produce Runge domains of $X$. Recall that a domain $\Omega \subseteq X$ is said to be {\bf Runge} if any holomorphic function on $\Omega$ can be approximated uniformly on compact subsets in $\Omega$ by holomorphic functions on $X$. The long-standing question whether there exist non-Runge Fatou-Bieberbach domains in $\C^n$ was settled by Wold in the positive (\cite{MR2372737}). The aim of this paper is to study the existence of non-Runge Fatou-Bieberbach domains in Stein manifolds with the density property. 

Our main results, Theorems \ref{thm: typeI} and \ref{thm: typeII}, give sufficient conditions for the existence of non-Runge Fatou-Bieberbach domains of the first kind and the second kind, respectively. For both kinds, we provide examples where the conditions of our theorems are satisfied (see Theorems \ref{example first kind} and \ref{example second kind}). In a forthcoming paper, we will provide many more examples of non-Runge Fatou-Bieberbach domains of the first kind. The examples of non-Runge Fatou-Bieberbach domains of the second kind seem to be much more difficult to find than those of the first kind (see the discussion after Remark \ref{final}).

The paper is organized as follows. In Section \ref{prelim}, we recall the Anders\'en-Lempert theorem and its relative version, as well as criteria for the density property. We also introduce a new notion of weak density property. Next, in Sections \ref{sec: typeI} and \ref{sec: typeII}, we prove Theorems \ref{thm: typeI} and \ref{thm: typeII}. Finally, in Section \ref{sec: example}, we apply these theorems to find nontrivial examples of non-Runge Fatou-Bieberbach domains of both kinds. 

\section{Preliminaries} \label{prelim}

In this section, we recall some tools from the Anders\'en-Lempert theory. For more information, see the monograph by Forstneri{\v c} \cite{MR2975791}*{\S 4} or the recent survey by Forstneri{\v c}-Kutzschebauch \cite{MR4440754}.  

\begin{definition}[Varolin \cite{MR1829353}] \label{def: DP}
    A complex manifold $X$ has the {\bf density property} if the Lie algebra of $\C$-complete holomorphic vector fields on $X$ is dense in the Lie algebra of all holomorphic vector fields on $X$, with respect to the compact-open topology.
\end{definition}

The following Anders{\'e}n-Lempert theorem is the main implication of the density property (see Anders{\'e}n-Lempert \cite{MR1185588}, Forstneri\v{c}-Rosay \cite{MR1213106}, Varolin \cite{MR1829353}).
\begin{theorem} \label{andlemp}
Let $X$ be a Stein manifold with the density property. If $\Phi_t : \Omega \stackrel{\cong}{\to} \Omega_t=\Phi_t(\Omega)\subseteq X$ $(t\in [0,1])$ is a continuous isotopy of biholomorphic maps between Stein Runge domains in $X$ where $\Phi_0: \Omega \to X$ is the inclusion map,  then $\Phi_1$ can be approximated uniformly on compact subsets in $\Omega$ by holomorphic automorphisms of $X$.
\end{theorem}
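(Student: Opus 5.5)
The plan follows the classical Andersén--Lempert scheme. Let $V_t$ be the time-dependent holomorphic vector field generating the isotopy, $V_t(\Phi_t(z)) = \frac{d}{dt}\Phi_t(z)$, defined on $\Omega_t$. First I reduce to a smooth isotopy: a continuous isotopy of biholomorphisms can be approximated, on a fixed compact subset, by one that is smooth in $t$, for instance by convolving in $t$ and using Cauchy estimates. Fix a compact $K\subset\Omega$ and $\epsilon>0$, and partition $[0,1]$ into $0=t_0<t_1<\dots<t_N=1$. On each short interval $[t_j,t_{j+1}]$, the map $\Phi_{t_{j+1}}\circ\Phi_{t_j}^{-1}$ is approximated on $\Phi_{t_j}(K)$ by the time-$(t_{j+1}-t_j)$ flow of the frozen field $V_{t_j}$. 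The error of this Euler-type approximation is $o(t_{j+1}-t_j)$ uniformly, so the composition of $N$ such steps approximates $\Phi_1$ on $K$ with error $o(1)$ as $N\to\infty$.

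The key step is to replace each holomorphic vector field $V_{t_j}$, which lives only on the Stein Runge domain $\Omega_{t_j}$, by a global field on $X$. Since $\Omega_{t_j}$ is Runge and Stein, the compact set $\Phi_{t_j}(K)$ can be enlarged to an $\holo(\Omega_{t_j})$-convex compact set. That set is then $\holo(X)$-convex, and on a Stein manifold Runge-type approximation holds for sections of the tangent bundle (Cartan's Theorem A/B together with Oka--Weil). So $V_{t_j}$ is approximable uniformly on $\Phi_{t_j}(L)$, with $L\supset K$ a slightly larger compact, by a global holomorphic field $W_j$ on $X$. The density property then approximates $W_j$ by a Lie combination of $\C$-complete fields. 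Using $[A,B]$ as the limit of $\phi^A_{\sqrt{s}}\circ\phi^B_{\sqrt{s}}\circ\phi^A_{-\sqrt{s}}\circ\phi^B_{-\sqrt{s}}$ and sums via $\phi^{A+B}_s\approx(\phi^A_{s/m}\circ\phi^B_{s/m})^m$, the time-$s$ flow of such a combination is approximated on compacts by compositions of flows of complete fields. These flows are automorphisms of $X$.

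Finally I control the error propagation. The flows are Lipschitz on compacts, so a standard Gronwall argument shows that small $C^0$ errors at each step accumulate to at most a constant times their sum. It also keeps all intermediate images inside a fixed compact neighbourhood where the approximations are valid. Choosing $N$ large and then each approximation fine enough, the composed automorphism is $\epsilon$-close to $\Phi_1$ on $K$.

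The main obstacle is the approximation of $V_{t_j}$ by global fields. It needs the Runge property of $\Omega_{t}$ to pass from $\holo(\Omega_t)$-convexity to $\holo(X)$-convexity, and this must be done uniformly in $t$. I would handle this by noting that $\bigcup_t\{t\}\times\Phi_t(L)$ is compact, and choosing the hulls continuously on small subintervals.
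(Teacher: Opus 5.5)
Your proposal is correct and follows essentially the same scheme the paper indicates in the proof of Theorem \ref{weakAL-Theorem}: freeze the generating field $V_{t_k}$ on a fine partition, approximate it on an $\holo(X)$-convex compact set via the Runge property plus Cartan's Theorems A/B and Oka--Weil, then use the density property and approximate flows of Lie combinations by compositions of automorphisms. Your $t$-smoothing step covers merely continuous isotopies, which the paper's sketch does not address since it differentiates directly; and the uniformity in $t$ you flag is not a real obstacle. Taking $K$ to be $\holo(\Omega)$-convex makes every $\Phi_t(K)$ automatically $\holo(\Omega_t)$-convex, hence $\holo(X)$-convex, and only finitely many frozen fields need approximating (after smoothing, do this on these sets from the original isotopy, since images of the smoothed maps need not be Runge).
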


Denote by $\mathrm{Aut}(X)$ the group of holomorphic automorphisms of $X$ and by $\holo(X)$ the ring of holomorphic functions on $X$. For any $x \in X$, a finite subset $S$ of the tangent space $T_{x}X$ is called a \textbf{generating set} if the image of $S$ under the action induced by the isotropy subgroup at $x$ (in $\mathrm{Aut}(X)$) spans $T_{x}X$ as a complex vector space. A pair $(V,W)$ of complete holomorphic vector fields on $X$ is a \textbf{compatible pair} if the closure $\overline{ \mathrm{Span}} (\ker V \cdot \ker W)$ (in the compact-open topology) contains a nontrivial ideal $I \subseteq \holo(X)$ and there exists a holomorphic function $h \in \ker V$ such that $W(h) \in \ker W \backslash \{0\}$. 

The following is a criterion by Kaliman and the second author \cite{MR2385667}*{Theorem 2} (see also Leuenberger \cite{MR3513546}). 
\begin{theorem} \label{theorem:KalKutHolomorph}
    Let $X$ be a Stein manifold on which $\mathrm{Aut}(X)$ acts transitively. Assume that there are compatible pairs $(V_1,W_1), (V_2, W_2), \dots, (V_m, W_m)$ of complete holomorphic vector fields and a point $x \in X$ such that the vectors $V_1(x), \dots, V_m(x)$ form a generating set for $T_x X$. Then $X$ has the density property.  
\end{theorem}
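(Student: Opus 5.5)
The plan is to show that the Lie algebra $\lie(X)$ generated by complete holomorphic vector fields is dense in $\vf(X)$. This follows the Kaliman--Kutzschebauch scheme. Let $L$ denote the closure of $\lie(X)$ in the compact-open topology. The goal is to show that $L$ contains a submodule of the form $I'\cdot \vf(X)$ for a nontrivial ideal $I'$. We then use transitivity of $\aut(X)$ to spread this out over all of $X$ and conclude $L=\vf(X)$.

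\textbf{Step 1: from one compatible pair to a module.} For a compatible pair $(V,W)$ and $f\in\ker V$, $g\in\ker W$, the fields $fV$ and $gW$ are complete, because $f$ is constant along the flow of $V$ and $g$ is constant along the flow of $W$. Likewise $hfV$ is complete, using the function $h\in\ker V$ from the definition. Computing brackets gives
\[ [fV,\,hgW]-[hfV,\,gW] = fg\,W(h)\,V . \]
Since $W(h)\in\ker W$, we may absorb it into $g$; after a small bookkeeping argument (taking several choices of $g$) this shows that $fgV\in \lie(X)$ for all $f\in\ker V$, $g\in\ker W$. Taking linear combinations and closures, $L\supseteq \overline{\mathrm{Span}}(\ker V\cdot\ker W)\cdot V\supseteq I\cdot V$. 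Doing this for each pair gives nontrivial ideals $I_j$ with $I_j V_j\subseteq L$.

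\textbf{Step 2: using the generating set and transitivity.} For $\alpha\in\aut(X)$ in the isotropy group of $x$, the pushforward satisfies $\alpha_*(I_jV_j)=(I_j\circ\alpha^{-1})\,\alpha_*V_j\subseteq L$, since $L$ is $\aut(X)$-invariant. The vectors $\alpha_*V_j(x)$ span $T_xX$, so finitely many such fields $\alpha_{k*}V_{j}$ span $T_yX$ for $y$ near $x$. Set $J=\bigcap I_j\circ\alpha_k^{-1}$; this is a nontrivial ideal, after first checking that $x$ can be arranged to lie off the zero set of each $I_j$, which is possible by moving $x$ via transitivity. Then $L$ contains $J\cdot\sum\holo(X)\,\alpha_{k*}V_j$. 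By Cartan's Theorem B this is a submodule of $\vf(X)$ whose stalk equals $\holo\cdot T$ near $x$.

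\textbf{Step 3: the global conclusion.} Translating by $\aut(X)$, which acts transitively, we obtain for every point $p$ a submodule $M_p\subseteq L$ that generates the tangent sheaf at $p$. By Stein compactness (a Noetherian argument on coherent subsheaves of $TX$), the closed $\holo(X)$-submodule generated by these equals $\vf(X)$, since the submodule sheaf they generate coincides with $\mathcal{T}_X$ everywhere. Closed submodules of $\vf(X)$ are determined by their stalks, so $L=\vf(X)$. One also needs $L$ to be an $\holo(X)$-module in this step. This is done by the analogous bracket trick: $[fV,gW]$ combined with $L\supseteq JV$ yields $\holo(X)$-multiples.

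\textbf{Main obstacle.} The delicate point is Step 1 together with the module structure in Step 3. Specifically, one must extract exactly $fgV$, rather than a mixed expression involving $W$, from brackets of complete fields, and one must verify that the resulting closed span is stable under multiplication by holomorphic functions. I would first try to settle this using the identity above with $g$ ranging over $\ker W$ and exploiting $W(h)\neq 0$.
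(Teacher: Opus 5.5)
Your outline matches the Kaliman--Kutzschebauch argument that the paper cites for this statement; the paper itself gives no proof. Their structure is: a compatible pair gives a submodule $I'V$ inside the closure $L$ of $\lie(X)$; the isotropy group gives a full fiber at one point; transitivity gives a full fiber everywhere; Theorem B and Oka--Weil give density. However, Step 1, which you rightly call the crux, has a gap.

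\textbf{Step 1.} Your identity $[fV,hgW]-[hfV,gW]=fg\,W(h)\,V$ is correct, but it is only useful if all four fields are complete. Your reasoning does not cover $hgW$: since $W(hg)=g\,W(h)\neq 0$, the coefficient $hg$ is not in $\ker W$. What you need is the lemma that $\varphi W$ is complete whenever $c:=W(\varphi)\in\ker W$.
\begin{itemize}
\item Along $W$-orbits $\varphi$ grows linearly, $\varphi(\psi_s(p))=\varphi(p)+s\,c(p)$, where $\psi_s$ is the flow of $W$.
\item Hence the flow of $\varphi W$ is $p\mapsto\psi_{\sigma(t,p)}(p)$ with $\sigma(t,p)=\varphi(p)\,(e^{c(p)t}-1)/c(p)$, read as $\varphi(p)t$ when $c(p)=0$. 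This is defined for all $t\in\C$.
\end{itemize}
Applied to $\varphi=hg$, you get $W(hg)=g\,W(h)\in\ker W$ precisely because $W(h)\in\ker W$. This, not ``absorbing $W(h)$ into $g$'', is where that hypothesis enters.

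Also, your bookkeeping claiming $fgV\in\lie(X)$ for all $f\in\ker V$, $g\in\ker W$ does not work, since you cannot divide by $W(h)$. It is also unnecessary. What you actually obtain is $L\supseteq W(h)\,I\,V$, and $I':=W(h)I$ is still a nonzero ideal, which is all that is used later.

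\textbf{Step 2.} Moving $x$ by some $\beta\in\aut(X)$ cannot take it off the zero sets, because the data move along with it. The generating set at $\beta(x)$ consists of the $\beta_*V_j$, whose ideals $I'_j\circ\beta^{-1}$ vanish at $\beta(x)$ if and only if the $I'_j$ vanish at $x$. Instead, use the openness you already observed:
\begin{itemize}
\item The finitely many $(\alpha_k)_*V_{j_k}$ span $T_yX$ for all $y$ near $x$.
\item The zero sets of the nonzero ideals $I'_{j_k}\circ\alpha_k^{-1}$ are nowhere dense, so some nearby $y$ has full fiber.
\item The $\aut(X)$-invariant $\holo(X)$-submodule $N=\sum_{\beta\in\aut(X)}\sum_j\beta_*(I'_jV_j)$ lies in $L$, and by transitivity it has full fiber everywhere.
\end{itemize}

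\textbf{Step 3.} You do not need $L$ to be an $\holo(X)$-module, and the bracket trick you suggest for this is not an argument. $N$ is a submodule contained in the linear space $L$. Theorem B on a Stein neighborhood of an $\holo(X)$-convex compact $K$, together with Oka--Weil, shows that $N$ is dense in $\vf_{hol}(X)$, exactly as in the proof of Theorem~\ref{weakAL-Theorem}. Since $L$ is closed, $L=\vf_{hol}(X)$.
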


\subsection{Weak density property}
Let $X$ be a Stein space, and $Y\subseteq X$ a closed analytic subvariety 
containing the set $X_{sing}$ of singularities of $X$. Let $\holo_X$ be the sheaf of holomorphic functions on $X$ and $\mathcal{T}_X$ the tangent sheaf of $X$. Let $I_Y \subseteq \holo(X)$ be the ideal of $Y$. We write $\vf_{hol}(X)$ for the $\holo(X)$-module of holomorphic vector fields on $X$. Let $\vf_{hol}(X,Y)$ be the $\holo(X)$-module of holomorphic vector fields vanishing on $Y$:
\[	
	\vf_{hol}(X,Y)= \{V \in \vf_{hol}(X) : V(\holo(X)) \subseteq I_Y\}, 
\]
and $\vf^Y_{hol}(X)$ the $\holo(X)$-module of of holomorphic vector fields on $X$ tangent to $Y$:
\[
    \vf^Y_{hol}(X)= \{V \in \vf_{hol}(X) : V(y) \in T_y Y \, \, \forall y \in Y \}.
\]
Denote by $\lie_{hol}(X,Y)$ the Lie algebra generated by complete vector fields in $\vf_{hol}(X,Y)$ and by $\aut^Y(X)$ the group of holomorphic automorphisms of $X$ preserving $Y$
\[
    \aut^Y(X) = \{ \alpha \in \aut(X) : \alpha(Y) \subseteq Y \}. 
\]

\begin{definition}[Kutzschebauch-Leuenberger-Liendo \cite{MR3320241}] \label{def:relDP} 
We say that $X$ has the {\bf density property relative to $Y$}, or $(X,Y)$ has the relative density property, if there exists an integer $\ell\geq 0$ such that $I_Y^\ell \vf_{hol}(X,Y)$ is contained in the closure of $\lie_{hol}(X,Y)$ in the compact-open topology.
\end{definition}

 We introduce the following version of density property. (See \cite{MR1829353} for a relevant discussion in the $\C^n$ setting.)

\begin{definition} \label{weakDP}
    We say that $X$ has the {\bf weak density property tangent to} $Y$, or $(X,Y)$ has the weak density property, if there exists a coherent sheaf $\mathcal{W}$ of $\holo_X$-submodules of $\vf^Y_{hol}(X)$ such that
    \begin{enumerate}
        \item every global section of $\mathcal{W}$ is contained in the closure of the Lie algebra generated by complete vector fields in $\vf^Y_{hol}(X)$, and 
        \item for every $x \in X \backslash Y$, the stalk of $\mathcal{W}$ at $x$ coincides with that of the tangent sheaf $\mathcal{T}_X$.
    \end{enumerate}
\end{definition}
Clearly, the relative density property of $(X, Y)$ implies the weak density property. 

\begin{theorem} \label{weakDP-criterion}
    Let $X$ be a Stein space, and $Y\subseteq X$ a closed analytic subvariety containing the set $X_{sing}$ of singularities of $X$. Suppose that
    \begin{enumerate}
        \item $\aut^Y(X)$ acts transitively on $X \backslash Y$, and 
        \item there are compatible pairs $(V_1,W_1), (V_2, W_2), \dots, (V_m, W_m)$ of complete vector fields in $\vf_{hol}^Y(X)$, and
        \item there exists a point $x \in X \backslash Y$ such that the vectors $V_1(x), \dots, V_m(x)$ form a generating set (w.r.t.\@ $\aut^Y(X)$ instead of $\aut(X)$) for $T_x X$.
    \end{enumerate}
    Then $X$ has the weak density property tangent to $Y$.  
\end{theorem}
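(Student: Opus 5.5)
We follow the Kaliman--Kutzschebauch proof of Theorem \ref{theorem:KalKutHolomorph}, now working entirely inside the Lie algebra $L$ generated by complete vector fields in $\vf^Y_{hol}(X)$, and let $\overline{L}$ be its closure in the compact-open topology. Define $\mathcal{W}$ as the subsheaf of $\mathcal{T}_X$ generated by the global sections of the $\holo(X)$-submodule $M \subseteq \overline{L}$ described below. We then check the two conditions of Definition \ref{weakDP}.

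The first step produces modules inside $\overline{L}$. For a compatible pair $(V,W)$ of complete fields tangent to $Y$, and for $f\in\ker V$, $g\in\ker W$, the fields $fV$, $gW$ and $fgW$ are again complete and tangent to $Y$. This is because multiplying a complete field by a function in its kernel keeps it complete, and it still preserves $Y$ since $V$ is tangent to $Y$. The standard commutator identity
\[
[fV,\, ghW]-[fhV,\, gW] = fg\,W(h)\,V
\]
(valid when $h\in\ker V$, after using $V(g)$-terms that cancel as in \cite{MR2385667}) shows that $fg\,W(h)V\in L$. Since $W(h)\in\ker W\setminus\{0\}$ we can absorb it, and we conclude that $\overline{L}$ contains $I\cdot W(h)\cdot V$ for a nontrivial ideal $I$, which we call $I_jV_j$ for each pair. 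Next we conjugate by $\alpha\in\aut^Y(X)$. Since $\alpha_*$ preserves tangency to $Y$ and completeness, $\alpha_*(I_jV_j)\subseteq\overline{L}$. Let $M$ be the $\holo(X)$-module spanned by all these modules; then $M\subseteq\overline{L}$, and closedness of $\overline{L}$ gives (1) for all global sections of the coherent sheaf generated by $M$. To see that these sections lie in $\overline{L}$, we use Cartan's Theorem A/B on the Stein space $X$: a global section of the coherent subsheaf is a limit of finite $\holo(X)$-combinations of generators. Restricting to a suitable countable family of generators, which is possible by Noetherian-type arguments on compacts, makes the sheaf coherent.

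The second step checks condition (2) on $X\setminus Y$. At the point $x$, $I_j$ is nontrivial, so by transitivity of $\aut^Y(X)$ on $X\setminus Y$ we may move the point to arrange that $I_j$ does not vanish there. The vectors $\alpha_*V_j$ for $\alpha$ in the isotropy of $x$ then span $T_xX$ by hypothesis (3). Hence the stalk of $\mathcal{W}$ at $x$ equals $(\mathcal{T}_X)_x$, because $X$ is smooth off $Y$ and Nakayama applies. Transitivity of $\aut^Y(X)$ on $X\setminus Y$, together with $\aut^Y(X)$-invariance of $M$, propagates this to every point of $X\setminus Y$.

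The main obstacle is that the zero set of $\mathcal{W}/\text{span}$ need not be controlled, because coherence and the invariance argument interact. Specifically, one must choose finitely or countably many automorphisms so that the sheaf generated is coherent, yet still equals $\mathcal{T}_X$ at every point off $Y$. We would handle this as in \cite{MR2385667}: take the sheaf generated by all of $M$, which is coherent as an increasing union of coherent subsheaves that stabilizes on compacts. Its stalks are automatically full off $Y$ by the $\aut^Y(X)$-invariance.
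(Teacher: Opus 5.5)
Your route is the same as the paper's. The paper simply invokes the first part of the Kutzschebauch--Leuenberger--Liendo proof, which is the Kaliman--Kutzschebauch scheme: put the modules $I_jW_j(h_j)V_j$ in $\overline{L}$, saturate under $\aut^Y(X)$, take the generated subsheaf (coherent by the Noether property), and get density of its global sections from Theorems A and B. Those parts of your proposal are fine.

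\textbf{The gap: condition (2).} You cannot use transitivity to ``move the point'' so that $I_jW_j(h_j)$ does not vanish while keeping hypothesis (3). If $\beta\in\aut^Y(X)$ sends $x$ to a point $x'$ off the zero set, (3) says nothing about the vectors $V_j(x')$. If instead you transport the whole configuration by $\beta$, the new coefficient $f\circ\beta^{-1}$ at $x'$ equals $f(x)$, so the vanishing is transported too.

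What is needed is an openness argument.
\begin{enumerate}
\item Choose $\alpha_1,\dots,\alpha_n$ in the isotropy group of $x$ and indices $j_k$ such that the vectors $d\alpha_k(V_{j_k}(x))$ form a basis of $T_xX$.
\item Since $\alpha_k(x)=x$, the holomorphic fields $(\alpha_k)_*V_{j_k}$ span $T_xX$, hence they span $T_yX$ for all $y$ in a neighborhood $U$ of $x$.
\item Pick nonzero $\varphi_k\in I_{j_k}W_{j_k}(h_{j_k})$, and pick $y\in U$ outside the nowhere dense zero sets of the functions $\varphi_k\circ\alpha_k^{-1}$.
\item The fields $(\alpha_k)_*(\varphi_kV_{j_k})=(\varphi_k\circ\alpha_k^{-1})\,(\alpha_k)_*V_{j_k}$ lie in $M$ and span $T_yX$.
\end{enumerate}
Only at this point do $\aut^Y(X)$-invariance of $M$, transitivity on $X\setminus Y$, and Nakayama give full stalks of $\mathcal{W}$ at every point of $X\setminus Y$.

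\textbf{A smaller error: completeness.} The field $fgW$ is not complete in general. On $\C^2$, take $V=\partial/\partial x$, $W=\partial/\partial y$, $f=y^2$, $g=1$; then $fgW=y^2\,\partial/\partial y$, which is not complete. Moreover, the field $ghW$ that your commutator identity actually uses is not covered by ``multiplying by a kernel function'', since $gh\notin\ker W$.

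The correct fact is that $\psi W$ is complete whenever $W(\psi)\in\ker W$. Along a $W$-orbit, $\psi$ is affine in the time parameter, so the reparametrization ODE is linear and has entire solutions. Since $W(gh)=gW(h)\in\ker W$, the field $ghW$ is complete. The field $fhV$ is complete because $fh\in\ker V$. This is exactly why the definition of a compatible pair requires $W(h)\in\ker W$.

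Your identity $[fV,ghW]-[fhV,gW]=fg\,W(h)\,V$ is correct; the $[V,W]$-terms cancel as well.
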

\begin{proof}
    Together with these three assumptions, the first part of the proof of \cite{MR3320241}*{Theorem 2.2} (replace algebraic objects by holomorphic ones) produces a coherent sheaf of $\holo_X$-submodules of $\vf^Y_{hol}(X)$ which satisfies the two conditions in Definition \ref{weakDP}. 
\end{proof}

The main purpose of introducing the weak density property is the following version of Theorem \ref{andlemp} with interpolation, which generalizes the relative Anders{\'e}n-Lempert theorem \cite{MR3320241}*{Theorem 6.3}.

\begin{theorem} \label{weakAL-Theorem} %
  Let $X$ be a Stein space with the weak density property tangent to a
  closed analytic subvariety $Y$ containing $X_{sing}$. Assume that $\Omega \subseteq X$ is a Stein Runge domain and $ \Phi_t :\Omega \to
  X \,(t \in [0,1])$ is a continuous isotopy of injective holomorphic maps as in Theorem \ref{andlemp}. Moreover, if $\Phi_t(\Omega) \cap Y = \emptyset$ for every $t\in[0,1]$, then $\Phi_1$ can be approximated uniformly on compact subsets in $\Omega$ by holomorphic automorphisms of $X$ which preserve $Y$. 
\end{theorem}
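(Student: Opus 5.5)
We follow the standard proof of the Andersén--Lempert theorem (Theorem \ref{andlemp}), but approximate the time-dependent vector field only by fields from the sheaf $\mathcal{W}$ of Definition \ref{weakDP}. Condition (2) of that definition says $\mathcal{W}$ equals $\mathcal{T}_X$ away from $Y$. Fields in $\mathcal{W}$ are tangent to $Y$, so the complete fields approximating them generate flows that preserve $Y$, and their compositions lie in $\aut^Y(X)$.

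\emph{Step 1: reduction to vector fields.} As usual we may assume the isotopy is $\mathcal{C}^1$ in $t$; if it is only continuous, we smooth it first, as in \cite{MR2975791}*{\S 4}. It then defines a time-dependent holomorphic vector field $V_t$ on the Runge domains $\Omega_t=\Phi_t(\Omega)$, with $\frac{d}{dt}\Phi_t=V_t\circ\Phi_t$. Fix a compact $K\subseteq\Omega$. Partition $[0,1]$ into small intervals $[t_{j},t_{j+1}]$. It suffices to approximate, on a compact $\mathcal{O}(X)$-convex neighbourhood $L_j$ of $\Phi_{t_j}(K)$ inside $\Omega_{t_j}$, the time-$(t_{j+1}-t_j)$ flow of the frozen field $V_{t_j}$ by an element of $\aut^Y(X)$. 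The errors are then controlled and composed exactly as in the classical proof.

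\emph{Step 2: approximation by global sections of $\mathcal{W}$.} This is the main point. We have $L_j\subseteq\Omega_{t_j}$, which is disjoint from $Y$ by hypothesis. Take the $\mathcal{O}(X)$-convex hull $\widehat{L_j}$. Since $\Omega_{t_j}$ is Runge and Stein, $\widehat{L_j}\subseteq\Omega_{t_j}$, so $\widehat{L_j}\cap Y=\emptyset$. Choose an open Stein (Runge) neighbourhood $U$ of $\widehat{L_j}$ with $U\cap Y=\emptyset$; its union with a neighbourhood of $Y$ can be arranged via a holomorphic function separating the two sets. Define a section of $\mathcal{T}_X=\mathcal{W}$ by setting it equal to $V_{t_j}$ on $U$ and $0$ on a neighbourhood $U'$ of $Y$. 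On $U\cup U'$ this is a section of $\mathcal{W}$, because the two stalks agree off $Y$ and $0\in\mathcal{W}$. Take the union to be a neighbourhood of an $\mathcal{O}(X)$-convex compact set $\widehat{L_j}\cup M$, where $M$ is a large compact piece of $Y$, or better, the union of $\widehat{L_j}$ with a holomorphically convex set around $Y$ defined by a function $f$ with $|f|$ small on $Y$ and large on $\widehat{L_j}$. Cartan's Theorem B for the coherent sheaf $\mathcal{W}$ (Oka--Weil approximation for sections of coherent sheaves) then gives a global section $W\in\Gamma(X,\mathcal{W})$ approximating $V_{t_j}$ uniformly on $L_j$. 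I expect this Runge-type approximation of sections of $\mathcal{W}$ to be the main obstacle. The cleanest approach is to use the surjection $\mathcal{O}_X^N\to\mathcal{W}$ on a large Stein compact. It also avoids the separation trick entirely: $V_{t_j}$ is a section of $\mathcal{W}$ near $\widehat{L_j}$, and Theorem B on the Stein space $X$ lets us approximate sections over a neighbourhood of an $\mathcal{O}(X)$-convex compact by global sections.

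\emph{Step 3: from $\mathcal{W}$ to automorphisms preserving $Y$.} By condition (1) of Definition \ref{weakDP}, $W$ is a limit of Lie combinations of complete fields in $\vf^Y_{hol}(X)$. Each such complete field is tangent to $Y$, so its flow maps preserve $Y$ (being tangent along $Y\supseteq X_{sing}$, the flow preserves $Y$ by uniqueness). The standard Lie-combination formulas approximate flows of sums and brackets by compositions of flows, e.g. $\phi^{V+W}_t=\lim(\phi^V_{t/n}\phi^W_{t/n})^n$ and the commutator formula for brackets. These give elements of $\aut^Y(X)$ approximating the flow of $W$, and hence that of $V_{t_j}$, on $L_j$. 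Composing over $j$ yields the required approximation of $\Phi_1$ on $K$.
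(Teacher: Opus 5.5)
Your proposal is correct and is essentially the paper's proof. You freeze the time-dependent field and observe that $\mathcal{W}$ coincides with $\mathcal{T}_X$ on the Runge domain $\Phi_{t_j}(\Omega)\subseteq X\setminus Y$. Theorems A and B then give finitely many global generators $s_1,\dots,s_N$ of $\mathcal{W}$ and a lift $V_{t_j}=\sum f_i s_i$, and Oka--Weil approximation of the coefficients produces a global section of $\mathcal{W}$. Condition (1) of Definition \ref{weakDP} finally passes to compositions of flows in $\aut^Y(X)$. The preliminary ``separation trick'' in your Step 2 is superfluous, since you only need approximation on $L_j$ and not near $Y$; drop it and keep the clean argument you end with.
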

\begin{proof}
    The idea is the same as for Theorem \ref{andlemp} (see e.g.\@ \cite{MR3023850}*{Appendix}). Here we indicate where the weak density property is needed. By definition, there exists a coherent sheaf $\mathcal{W}$ of $\holo_X$-submodules of $\vf_{hol}^Y(X)$ as in Definition \ref{weakDP}. 
    
    Taking derivative of the isotopy $\Phi_t$, we obtain time dependent vector fields $V_t$ on $\Phi_t(\Omega)$ for $t \in [0,1]$. After dividing the interval $[0,1]$ into small enough subintervals $[t_k, t_{k+1}]$ and choosing the time independent vector fields $V_{t_k}$ on $\Phi_{t_k}(\Omega)$, one needs to approximate the vector field $V_{t_k}$ defined on $\Phi_{t_k}(\Omega)$ by a global section of $\mathcal{W}$ on a compact subset $K \subseteq \Phi_{t_k}(\Omega)$. 

    Without loss of generality, assume that $K$ is holomorphically convex in $\Phi_{t_k}(\Omega)$. Let $U$ be a Stein open neighborhood of $K$ such that $K \subseteq U \subseteq \Phi_{t_k}(\Omega)$. By Cartan's Theorem A, there exist finitely many global sections $s_1, s_2, \dots, s_N$ of $\mathcal{W}$ which generate the stalk of $\mathcal{W}$ at every point of $U$.
    
    Since $\mathcal{W}$ and $\mathcal{T}_X$ are identical on the level of stalks on $X \backslash Y$, and we can also regard $s_1, s_2, \dots, s_N$ as global sections of $\mathcal{T}_X$, we have that the morphism of coherent sheaves 
    \[
        \varphi : \holo_X^N \rvert_U \to \mathcal{T}_X \rvert_U, \, \, (f_1, f_2, \dots, f_N) \mapsto f_1 s_1 + f_2 s_2 + \cdots + f_N s_N
    \]
    %$\varphi : \holo_X^N \rvert_U \to M \rvert_U$ given by $(f_1, f_2, \dots, f_N) \mapsto f_1 s_1 + f_2 s_2 + \cdots + f_N s_N$ 
    is surjective on the level of stalks. This induces the short exact sequence
    \[
        0 \to \ker \varphi \to \holo_X^N \rvert_U \to \mathcal{T}_X \rvert_U \to 0,
    \]
    from which we deduce the long exact sequence
    \[
        \cdots \to H^0(U, \holo_X^N \rvert_U) \to H^0(U, \mathcal{T}_X \rvert_U) \to H^1(U, \ker \varphi) \to \cdots.
    \]
    By the coherence of $\ker \varphi$ and Cartan's Theorem B, we have $H^1(U, \ker \varphi) = 0$. Thus, the map $H^0(U, \holo_X^N \rvert_U) \to H^0(U, \mathcal{T}_X \rvert_U)$ is surjective and every section of $\mathcal{T}_X \rvert_U$ is of the form $\sum_i f_i s_i$ with $f_i \in \holo(U)$. Hence, $V_{t_k} \rvert_U$ is equal to $\sum_i f_i s_i$ for some $f_i \in \holo(U)$. Moreover, since $K$ is holomorphically convex in $\Phi_{t_k}(\Omega)$ and $\Phi_{t_k}(\Omega)$ is Runge in $X$, there exists $g_i \in \holo(X)$ which approximates $f_i$ on $K$. Thus the vector field $\sum_i g_i s_i$, which is a global section of $\mathcal{W}$, approximates $V_{t_k}$ on $K$.  

    Furthermore, since global sections of $\mathcal{W}$ can be uniformly approximated by Lie combinations of complete vector fields in $\vf^Y_{hol}(X)$, the flow of $V_{t_k}$ for small enough time can be uniformly approximated by composition of  holomorphic automorphisms in $\aut^Y(X)$. 
\end{proof}

\begin{remark} \label{emptyInt}

    (1) Compared to the relative density property of $(X, Y)$, the advantage of the weak version is that we can use any vector fields on $X$ which are tangent to $Y$, while still have the freedom of moving compact subsets in $X \backslash Y$ around.  Depending on which complete vector fields from $\vf^Y_{hol}(X)$ have been used in the criteria in Theorem \ref{weakDP-criterion}, we get different Lie subalgebras of $\vf^Y_{hol}(X)$ satisfying Varolin's original density property. For example, when only complete vector fields vanishing on $Y$ are used, we get the relative density property. 

    (2) If $(X, Y)$ has the weak density property, then so does $(X, \alpha(Y))$ for any $\alpha \in \aut(X)$, since completeness and Lie brackets of vector fields are preserved by automorphisms.  
\end{remark}

\section{Fatou-Bieberbach domains of the first kind} \label{sec: typeI}

In \cite{MR2372737}, Wold modified Stolzenberg's example \cite{MR0203080} to construct a pair of disjoint totally real disks whose union $Y$ is holomorphically convex in $\C^* \times \C$, while its polynomially convex hull $\widehat{Y}$ contains the origin of $\C^2$. By the density property of $\C^* \times \C$ (\cite{MR1829353}), there exist a Fatou-Bieberbach domain $\Omega$ of the first kind and a holomorphic automorphism $\Psi$ of $\C^* \times \C$ such that $\Psi(Y) \subseteq \Omega$. Thus $\Psi^{-1}(\Omega)$ is a Fatou-Bieberbach domain which contains $Y$ but not $\widehat{Y}$. We generalize Wold's method to Stein manifolds with the density property. 

\begin{theorem}\label{thm: typeI}
    Let $X$ be a Stein manifold and $H \subseteq X$ a complex hypersurface such that $X \backslash H$ is a Stein manifold with the density property. Then, there exists a Fatou-Bieberbach domain of the first kind in $X \backslash H$, which is Runge in $X \backslash H$ but non-Runge in $X$.
\end{theorem}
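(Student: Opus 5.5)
We follow Wold's strategy from the discussion preceding the statement. We want a compact set $Y\subseteq X\backslash H$, holomorphically convex in $X\backslash H$, whose $\holo(X)$-convex hull $\widehat{Y}_X$ meets $H$. We then move $Y$ by an automorphism of $X\backslash H$ into a Fatou-Bieberbach domain of the first kind.

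\emph{Step 1 (local model).} Pick a smooth point $p\in H$ and holomorphic coordinates $(z_1,\dots,z_n)$ on a neighborhood $U\cong$ polydisc $\Delta^n$ of $p$ in which $H\cap U=\{z_1=0\}$. Inside $U\backslash H\cong \Delta^*\times\Delta^{n-1}$ we place a scaled copy of Wold's (Stolzenberg-type) configuration: two disjoint totally real discs $D_1,D_2$ in the $(z_1,z_2)$-plane, times a point in the remaining coordinates. For $n=2$ this is exactly Wold's example; for larger $n$ we take the product with a point, which is harmless. The union $Y=D_1\cup D_2$ is polynomially convex in $(\C^*\times\C)$-sense, i.e.\ $\holo(\C^*\times\C)$-convex, but its hull in $\C^2$ contains the origin, which corresponds to $p$.

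\emph{Step 2 (convexity properties in $X$).} We must show that $\widehat{Y}_X\ni p$ and that $Y$ is $\holo(X\backslash H)$-convex. For the first, take $f\in\holo(X)$ and restrict it to $U$. The hull in $U$ computed with functions holomorphic on $\Delta^n$ already contains $p$, by Wold's argument, which uses the maximum principle on analytic discs or annuli with boundary in $Y$ filling out to the origin. Scaling keeps $Y$ well inside $U$, so the same discs force $|f(p)|\le\max_Y|f|$. Hence $\widehat{Y}_X\ni p\notin X\backslash H$.

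The second claim is the main obstacle. Wold's proof of $\holo(\C^*\times\C)$-convexity uses the function $1/z_1$ together with polynomials. In $X$ we would use a global function: since $X$ is Stein we may arrange a holomorphic $h\in\holo(X)$ vanishing on $H$ and equal to $z_1$ near $p$ up to a unit. For instance, we can take $h$ with $H\subseteq h^{-1}(0)$ and $dh\neq0$ at $p$, which is possible after possibly passing to $X\backslash h^{-1}(0)$-type arguments. If $H$ is not a global zero set, we instead work with the Stein manifold $X\backslash H$ directly. We then approximate the local functions used by Wold ($z_1,z_2,1/z_1$) on a neighborhood of $Y$ by elements of $\holo(X\backslash H)$. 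This is possible because small polydiscs in $U\backslash H$ chosen so that the configuration is holomorphically convex there are Runge in the Stein manifold $X\backslash H$ after shrinking, which we would check via Oka–Weil applied to an $\holo(X\backslash H)$-convex compact neighborhood. The cleanest route is to show that $\widehat{Y}_{X\backslash H}$ is contained in a small neighborhood $K$ of $Y$, using $\holo(X\backslash H)$-convexity of a suitable compact $L\supseteq Y$ together with local convexity, and then to apply Rossi's local maximum principle to conclude $\widehat{Y}_{X\backslash H}=Y$.

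\emph{Step 3 (embedding into a FB domain).} By Varolin's basin-of-attraction construction in $X\backslash H$ (density property), there is an automorphism $F$ of $X\backslash H$ with an attracting fixed point $q$ whose basin $\Omega\cong\C^n$ is a proper Runge subset of $X\backslash H$. Since $Y$ is $\holo(X\backslash H)$-convex and totally real discs are contractible, there is an isotopy of a Stein Runge neighborhood of $Y$ shrinking it into a small ball around $q$ through Runge images. Theorem \ref{andlemp} then gives $\Psi\in\aut(X\backslash H)$ with $\Psi(Y)\subseteq\Omega$. The domain $\Psi^{-1}(\Omega)$ is a Fatou-Bieberbach domain of the first kind, Runge in $X\backslash H$, and it contains $Y$. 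It is non-Runge in $X$: otherwise it would be $\holo(X)$-convex-compatible, so the $\holo(X)$-hull of $Y$ would lie in $\Psi^{-1}(\Omega)\subseteq X\backslash H$, contradicting $p\in\widehat{Y}_X$. Here we use that for a Runge Stein domain the hull of a compact set with respect to the domain equals its $\holo(X)$-hull, and that this hull is compact in the domain.
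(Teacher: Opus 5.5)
Your outline follows the paper's proof. You place Wold's configuration in a chart at $p\in H$, get $p\in\widehat{Y}_X$ by restricting functions to the chart (this is fine), move $Y$ into a basin of attraction with Theorem~\ref{andlemp}, and conclude non-Rungeness with the hull argument. The gap is the step you yourself flag as the main obstacle: you never prove that $Y$ is $\holo(X\backslash H)$-convex. This step carries the whole theorem, since Theorem~\ref{andlemp} has to be applied on a Stein Runge neighbourhood of $Y$ in $X\backslash H$, and any such domain contains $\widehat{Y}_{X\backslash H}$. None of the routes you list closes it:
\begin{itemize}
\item Approximating $z_1,z_2,1/z_1$ only on a neighbourhood of $Y$ gives no control at a point $x\in(U\backslash H)\backslash Y$, so it cannot separate $x$ from $Y$.
\item Points of $X\backslash H$ outside the chart are not handled at all, since you never arrange the chart polydisc to be $\holo(X)$-convex.
\item ``Oka--Weil applied to an $\holo(X\backslash H)$-convex compact neighbourhood'' is circular. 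Producing a compact $L$ with $Y\subseteq L\subseteq U\backslash H$ and $\widehat{L}_{X\backslash H}=L$ is exactly what has to be shown.
\end{itemize}

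The missing idea is Step~3 of the paper. Choose the chart polydisc $D$ Runge in $X$ (a polydisc in a chart on a Runge neighbourhood of $p$), and prove that $D\backslash H$ is Runge in $X\backslash H$. Take $f\in\holo(D\backslash H)$ and a compact $K\subseteq D\backslash H$. Cut off the principal part of the Laurent expansion of $f$ in $z_1$ at order $N$; this gives $f_N$ close to $f$ on $K$ with $h^Nf_N\in\holo(D)$, where $h$ defines $H$ and equals $z_1$ times a unit on $D$. Approximate $h^Nf_N$ on $K$ by some $g\in\holo(X)$. Then $g/h^N\in\holo(X\backslash H)$ approximates $f$ on $K$, because $|h|\ge\delta>0$ there. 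Since $D\backslash H$ is Stein, this yields $\widehat{Y}_{X\backslash H}=\widehat{Y}_{D\backslash H}=Y$.

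In terms of your ``cleanest route'', this is the following. Take a smaller closed polydisc $\overline{D'}$, which is $\holo(X)$-convex, and set $L=\overline{D'}\cap\{|h|\ge\delta\}$. Then $\widehat{L}_{X\backslash H}\subseteq\overline{D'}\cap\{|1/h|\le 1/\delta\}=L$. Oka--Weil on $L$ now transfers the $\holo(D\backslash H)$-convexity of $Y$ to $\holo(X\backslash H)$-convexity, and Rossi's principle is not needed.

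The paper takes a global defining function $h$ for granted. If $H$ is not a global zero set, you can replace $1/h$ by a global section of $\holo_X(H)$ of the form $u/z_1$ with $u(p)\neq 0$ near $p$. Such a section exists by Cartan's Theorem~A and is holomorphic on $X\backslash H$.

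A lesser point concerns your Step~3. Contractibility of the discs is not what produces the isotopy. You need a smooth isotopy of the totally real discs (for instance, shrinking each inside itself) whose images stay $\holo(X\backslash H)$-convex. You also need a result that approximates this isotopy by holomorphic injective maps with Runge images, as in the paper's Step~4.
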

\begin{proof}
{\bf Step 1}.  
By the density property of $X \backslash H$, there is a Fatou-Bieberbach domain $\Omega \cong \C^n$ in $X \backslash H$, where $n = \dim_\C X$. This can be obtained as the basin of attraction of an attracting fixed point of an automorphism of $X \backslash H$, see \cite{MR1785520}*{Corollary 4.1}. It then also follows that $\Omega$ is Runge in $X \backslash H$. 
\medskip

\noindent
{\bf Step 2}. 
Fix $p_0\in H$. Let $U$ be a Runge open neighborhood of $p_0$ in $X$ and find an injective holomorphic map $\varphi: U\rightarrow \mathbb{C}^n$ such that $\varphi(H \cap U)=\{z_1=0\} \cap \varphi(U)$ and $\varphi(p_0)=0$. Fix a sufficiently small $\epsilon > 0$. Consider a small polydisk $\Delta^n_\epsilon \subseteq V:=\varphi(U)$ centered at 0. Set $D:=\varphi^{-1}(\Delta^n_\epsilon)$. Note that the polydisk $\Delta^n_\epsilon$ is Runge in $V$, thus the preimage $D$ is Runge in $U = \varphi^{-1}(V)$.

Consider the embedding
$$
    \iota: \C^2 \hookrightarrow \C^n, (x, y) \mapsto (\epsilon x/3, \epsilon y/3, 0, \dots, 0).
$$ 
Set $Z = \iota(Y)$, with $Y$ given in \cite{MR2372737} as above. Then $Z = Z_1 \cup Z_2 \subseteq \Delta^n_\epsilon \backslash \{z_1 =0 \}$ is a disjoint union of two totally real disks, such that $Z$ is holomorphically convex in $\Delta^n_\epsilon \backslash \{z_1 =0\}$ and the polynomially convex hull of $Z$ intersects the hyperplane $\{z_1=0\}$.

Now we set $W:= \varphi^{-1}(Z) = W_1 \cup W_2 \subseteq D \backslash H$. For any domain $M\supset W$, denote by $\widehat{W}_M$ the $\holo(M)$-convex hull of $W$. Note that $\widehat{W}_D=\widehat{W}_X$ and
\begin{align} \label{WhullH}
    \widehat{W}_D \cap H = \widehat{W}_X \cap H  \neq \emptyset.
\end{align}
Meanwhile, $W$ is holomorphically convex in $D \backslash H$. 
By the next step, $W$ is holomorphically convex in $X \backslash H$ as well. 
\medskip

\noindent
{\bf Step 3}. We show that $D\backslash H$ is Runge in $X\backslash H$.
Let $K$ be a compact subset of $D\backslash H$ and $f$ a holomorphic function on $D\backslash H$. Then, $\tilde{f} := f \circ \varphi^{-1}$ has the following Laurent series expansion on $\Delta^n_\epsilon\backslash \{z_1=0\}$:
    $$\tilde{f}(z)=\sum_{k=-\infty}^\infty z_1^k\cdot a_k(z_2,z_3,\cdots,z_n),$$
    where each $a_k(z_2,z_3,\cdots,z_n)$ is holomorphic. Since the Laurent series converges uniformly on $\widetilde{K}:=\varphi(K)$, for any $\epsilon_1>0$, there exists $N> 0$ such that
    \[  
        \|\tilde{f}-\tilde{f}_N\|_{\widetilde{K}}<\epsilon_1, \text{ where } \tilde{f}_N(z):=\sum_{k=-N}^\infty z_1^k\cdot a_k(z_2,z_3,\cdots,z_n).
    \]
    Set $\tilde{g}_N(z):=z_1^N \tilde{f}_N(z)$. Then, $g_N:=\tilde{g}_N\circ \varphi$ is holomorphic on $D$. Since $D$ is Runge in $X$, for any $\epsilon_2>0$, there exists a holomorphic function $g$ on $X$ such that
    $$\|g_N- g \|_K<\epsilon_2.$$
    Note that $f=\tilde{f}\circ \varphi$ and $f_N:=\tilde{f}_N\circ \varphi=g_N/(z_1\circ \varphi)^N$. Note also that $z_1\circ \varphi$ and the defining function $h$ of $H \subseteq X$ only differ by a nonvanishing holomorphic function $h^\ast$, and thus $|z_1\circ \varphi|$ has a positive lower bound $\delta$ on $K$. Hence,
    $$\|f - g/(h\cdot h^\ast)^N\|_K\le \|f-f_N\|_K+\|f_N-g/(h\cdot h^\ast)^N\|_K<\epsilon_1+\epsilon_2/\delta^N.$$
    Since $g/(h\cdot h^\ast)^N$ is holomorphic on $X\backslash H$, this shows that $D\backslash H$ is Runge in $X \backslash H$.
\medskip

\noindent
{\bf Step 4}. Here we use the density property of $X \backslash H$ to move $W = W_1 \cup W_2$ into the Fatou-Bieberbach domain $\Omega$. First, there exists a continuous isotopy of local diffeomorphisms which map the two totally real disks $W_i, i = 1,2$ into separate sufficiently small balls $B_i$ (which are preimages of small balls in $\Delta_\varepsilon^n$). Then, we approximate this isotopy of local diffeomorphisms by a continuous isotopy of injective holomorphic maps of $D \backslash H$ using \cite{MR1314745}*{Corollary 3.2}, which in turn is approximable by a continuous isotopy  of holomorphic automorphisms of $X \backslash H$ by Theorem \ref{andlemp}.

Since $X \backslash H$ is connected, we can move $B_1 \cup B_2$ by a continuous isotopy of local biholomorphic maps into $\Omega$, while keeping the images of $B_1 \cup B_2$ holomorphically convex in $X \backslash H$. Finally, \cite{MR1213106}*{Lemma 2.2} and Theorem \ref{andlemp} give a continuous isotopy of holomorphic automorphisms of $X \backslash H$ which approximates this isotopy of local maps. 

By these two isotopies of holomorphic automorphisms of $X \backslash H$, we have an automorphism $\Phi$ taking $W \subseteq X \backslash H$ into $\Omega$. 
Then, the preimage $\Phi^{-1}(\Omega)$ is a Fatou-Bieberbach domain of the first kind containing $W$, which is holomorphically convex in $X \backslash H$. 

It remains to see that $\Phi^{-1}(\Omega)$ is non-Runge in $X$. If $\Phi^{-1}(\Omega)$ were Runge in $X$, then $\widehat{W}_X=\widehat{W}_{\Phi^{-1}(\Omega)}$. Since $\Phi^{-1}(\Omega)$ is Stein, $\widehat{W}_X \subseteq \Phi^{-1}(\Omega) \subseteq X \backslash H$. However, this contradicts \eqref{WhullH}.
\end{proof}

\section{Fatou-Bieberbach domains of the second kind} \label{sec: typeII}

In this section, we construct non-Runge Fatou-Bieberbach domains of the second kind using similar ideas as in the last section, together with the push-out method.

%Let $X$ be a Stein manifold and $H \subseteq X$ a complex hypersurface. We say that the pair $(X,H)$ satisfies {\bf Property (PO)} if for any compact subsets $K' \subseteq K \subseteq X$ such that $H \cap K' = \emptyset$ and $H \cap K \neq \emptyset$, there exists a continuous isotopy $\alpha_t \in \aut(X)$ $(t \in [0,1])$ such that $\alpha_0 = Id$ and $\alpha_1(H) \cap K = \emptyset$.

Let $X$ be a Stein manifold and $H \subseteq X$ a complex hypersurface. We say that the pair $(X,H)$ satisfies {\bf Property (PO)} if for any compact subset $K \subseteq X$ such that $H \cap K \neq \emptyset$, there exists a continuous isotopy $\alpha_t \in \aut(X)$ $(t \in [0,1])$ such that $\alpha_0 = Id$ and $\alpha_1(H) \cap K = \emptyset$.

\begin{figure}
    \centering
    \includegraphics[width=0.85\linewidth]{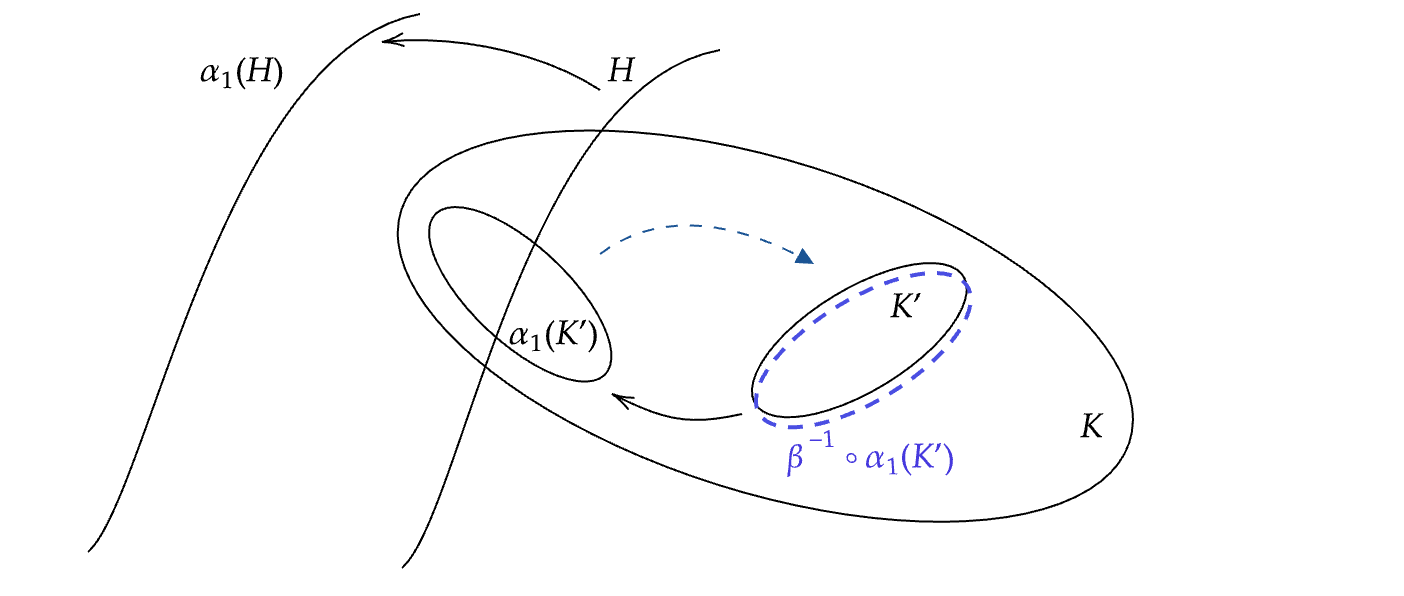}
    \caption{Illustration of Property (PO)}
    \label{fig:placeholder}
\end{figure}

\begin{proposition} \label{prop:extraCond}
    Let $X$ be a Stein manifold and $H \subseteq X$ a complex hypersurface.  Assume that
    \begin{enumerate}
        \item $X$ has the weak density property tangent to $H$, and
        \item $(X,H)$ satisfies Property (PO).
    \end{enumerate}
    Let $K' \subseteq K$ be compact subsets of $X$ such that $H \cap K' = \emptyset$ and $H \cap K \neq \emptyset$. 
    Then, there exists a continuous isotopy $\alpha_t \in \aut(X)$ $(t \in [0,1])$ such that $\alpha_0 = Id$, $\alpha_1(H) \cap K = \emptyset$ and $\alpha_1(H) \cap \alpha_t(K') = \emptyset$ for all $t \in [0,1]$.
\end{proposition}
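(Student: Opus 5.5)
Start from the isotopy $\beta_t\in\aut(X)$ given by Property (PO) for $K$, with $\beta_0=Id$ and $\beta_1(H)\cap K=\emptyset$. It need not satisfy the extra requirement $\beta_1(H)\cap\beta_t(K')=\emptyset$, because the moving set $\beta_t(K')$ may cross the final hypersurface $\beta_1(H)$. The plan is to keep the final position of $H$ and correct the motion of $K'$ by an isotopy that preserves $\beta_1(H)$. Set $H_1:=\beta_1(H)$. By Remark \ref{emptyInt}(2), $(X,H_1)$ again has the weak density property. Since $K'\cap H=\emptyset$, we have $\beta_1(K')\cap H_1=\emptyset$, and by hypothesis also $K'\cap H_1=\emptyset$. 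So both endpoints of the path $t\mapsto\beta_t(K')$ lie in $X\backslash H_1$; only the intermediate times are a problem.

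I would replace this path by one avoiding $H_1$, and realize it by automorphisms preserving $H_1$ using Theorem \ref{weakAL-Theorem}. First enlarge $K'$ to a compact, $\holo(X)$-convex set $L$ with $L\cap H=\emptyset$, which is possible since $H$ is a hypersurface. Concretely, take a sublevel set of a function $|h|^{-1}$ combined with an exhaustion; it is Runge in $X$ and has a Stein Runge neighborhood $\Omega$ with $\overline\Omega\cap H=\emptyset$.

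The goal is an isotopy $\gamma_t\in\aut^{H_1}(X)$ with $\gamma_0=Id$ and $\gamma_1\approx\beta_1$ on $K'$, with $\gamma_t(K')\cap H_1=\emptyset$ for all $t$. Then define $\alpha_t:=\gamma_t$ for $t\in[0,1]$, after arranging that $\gamma_1$ also pushes $H$ off $K$. More cleanly, define $\alpha_t$ by concatenation: first $\beta_t$ applied only to $H$-side data, and then correct. The intended final formula is $\alpha_1=\beta_1$ and $\alpha_t(K')=\gamma_t(K')$. For this I would set $\alpha_t:=\gamma_t\circ\beta_{s(t)}$-type reparametrizations. The real requirement, however, is simply an isotopy ending at an automorphism $\alpha_1$ with $\alpha_1(H)\cap K=\emptyset$ whose $K'$-trajectory misses $\alpha_1(H)$.

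The simplest choice is $\alpha_t:=\gamma_t^{-1}$ composed appropriately. Instead, I will take $\alpha_t:=\beta_1\circ\delta_t$, where $\delta_t$ is built as follows. Choose an isotopy $\delta_t\in\aut^{H}(X)$ with $\delta_0=Id$ that moves $K'$ within $X\backslash H$ in such a way that $\beta_1\circ\delta_t$ is connected to the identity. Then $\alpha_1(H)=\beta_1(\delta_1(H))=\beta_1(H)$ misses $K$.

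The construction of $\delta_t$ goes through the isotopy $\Phi_t:=\beta_1^{-1}\circ\beta_{1-t}$, restricted to a neighborhood of $K'$. Whenever $\Phi_t(\Omega)$ avoids $H$, Theorem \ref{weakAL-Theorem} approximates it by elements of $\aut^H(X)$. The condition needed is exactly that $\beta_{1-t}(\Omega)$ avoids $H_1$, which is again the crossing problem. So instead I pick a different path in $X\backslash H_1$ from $K'$ to $\beta_1(K')$, namely an isotopy of holomorphic embeddings of $\Omega$ through Runge images avoiding $H_1$. I then apply Theorem \ref{weakAL-Theorem} relative to $H_1$ to obtain $\gamma_t\in\aut^{H_1}(X)$ with $\gamma_1\approx\beta_1$ near $K'$. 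Setting $\alpha_t:=\gamma_t$ on $[0,1/2]$ and then joining $\gamma_1$ to $\beta_1$ gives the result, provided the join keeps $H_1$ off the image of $K'$.

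The main obstacle is producing a path of Runge embeddings from $K'$ to $\beta_1(K')$ inside $X\backslash H_1$ that stays homotopic to the given one, together with the final joining step. The first attempt for this is to move $K'$ into a small ball by a local isotopy, as in Step 4 of Theorem \ref{thm: typeI}, and transport that ball along a curve in the connected set $X\backslash H_1$.
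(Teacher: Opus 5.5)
\textbf{There is a genuine gap, and it is exactly the step you call the ``main obstacle''. It cannot be filled in general.}

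Several supporting steps are also wrong:
\begin{enumerate}
\item \emph{The convex enlargement fails.} You cannot in general enlarge $K'$ to an $\holo(X)$-convex compact $L$ with $L\cap H=\emptyset$, because the $\holo(X)$-hull of $K'$ may meet $H$. Examples are $K'=\{w=0,\ |z|=1\}\subseteq\C^2_{w,z}$ with $H=\{z=0\}$, or Wold's set $W$ in Step 2 of the proof of Theorem \ref{thm: typeI}. Sets cut out by $|h|\ge c$ are at best $\holo(X\backslash H)$-convex, not $\holo(X)$-convex. So Theorem \ref{weakAL-Theorem} has no Runge domain to work on.
\item \emph{Your candidate isotopies are not admissible.} $\beta_1\circ\delta_t$ starts at $\beta_1\neq Id$. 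If you stop at $\gamma_1\in\aut^{H_1}(X)$, then $\gamma_1(H)$ is uncontrolled, since $\gamma_1$ preserves $H_1$, not $H$. The ``join'' from $\gamma_1$ to $\beta_1$ is just the original problem again.
\item \emph{The path you need can be topologically obstructed.} A path of Runge embeddings from $K'$ to (approximately) $\beta_1(K')$ inside $X\backslash H_1$ may fail to exist, whatever (PO)-isotopy you start from. Suppose a loop $c\subseteq K'$ is null-homotopic in $K$ but not in $X\backslash H$. Then $c$ is null-homotopic in $X\backslash H_1$, because $K\subseteq X\backslash H_1$. But $\beta_1\circ c$, and every loop close to it, is not: apply the homeomorphism $\beta_1^{-1}\colon X\backslash H_1\to X\backslash H$. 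Shrinking $K'$ into a ball and transporting it cannot change this.
\end{enumerate}

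\textbf{The same argument shows the statement is false as written.}
\begin{itemize}
\item \emph{Setting.} Take $X=\C^2_{w,z}$ and $H=\{z=0\}$. Hypothesis (1) holds by Proposition \ref{lem: ADP-XxC*}(2), applied with the flexible factor $\C_w$. (PO) holds via translations in $z$.
\item \emph{The sets.} Let $K'=\{(e^{-i\theta},e^{i\theta})\}$. It is even polynomially convex, being a compact subset of the plane $\{(\bar z,z)\}$, which is complex-linearly equivalent to $\mathbb{R}^2$. Let $K$ be the closed ball of radius $2$.
\item \emph{The contradiction.} Let $c$ parametrize $K'$ and take $\alpha_t$ as in the conclusion. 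Then $\alpha_t\circ c$ is a free homotopy in $X\backslash\alpha_1(H)$ from $c$ to $\alpha_1\circ c$. Here $c$ is null-homotopic in $X\backslash\alpha_1(H)$, since it lies in the convex set $K\subseteq X\backslash\alpha_1(H)$. Hence $c$ is null-homotopic in $X\backslash H=\C_w\times\C^*_z$. This contradicts the fact that $z\circ c$ winds once around $0$.
\end{itemize}

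\textbf{Comparison with the paper.} The paper does not fix the final hypersurface as you do. It keeps $K'$ almost stationary and advances $H$ along the (PO)-isotopy in short steps. After each step it pulls the image of $K'$ back by an automorphism preserving the \emph{current} hypersurface $\alpha_{t_i}(H)$, using Remark \ref{emptyInt}(2) and Theorem \ref{weakAL-Theorem}. But the composite automorphism is bijective, sends $H$ to $\alpha_{t_i}(H)$, and sends $K'$ close to $K'$. So the scheme tacitly requires the (PO)-isotopy to keep $H$ off $K'$ at all times. (PO) does not provide this, and the example above rules it out: any isotopy pushing $H$ off $K$ there must sweep $H$ across $K'$.

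An extra hypothesis is therefore needed. For instance, assume the (PO)-isotopy can be chosen with every $\alpha_t(H)$ disjoint from a fixed Stein Runge neighbourhood of $K'$. Under that assumption the paper's step-by-step correction does go through.
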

\begin{proof}
    By Property (PO), there exists a continuous isotopy $\alpha_t \in \aut(X)$ $(t \in [0,1])$ such that $\alpha_0 = Id$, $\alpha_1(H) \cap K = \emptyset$. For the additional condition of $\alpha_1(H) \cap \alpha_t(K') = \emptyset$ for all $t \in [0,1]$, we proceed as follows. 
    
    There exists $t_1 \in [0,1]$ such that $\alpha_{t}(K') \subseteq K$ for all $t \in [0,t_1]$. At $t_1$, by condition (2) for the pair $(X, \alpha_{t_1}(H))$, we can move $\alpha_{t_1}(K')$ approximately back to $K'$ by an automorphism of $X$ while preserving $\alpha_{t_1}(H)$. Then again, there exist $t_2 \in (t_1,1]$ such that $(\alpha_{t} \circ \alpha_{t_1}^{-1})(K') \subseteq K$ for all $t \in (t_1, t_2]$ and an automorphism taking $(\alpha_{t_2} \circ \alpha_{t_1}^{-1})(K')$ approximately back to $K'$. Continuing this iteration (there exists a uniform $0< t_0 \le t_i$ for all $i$ by the compactness of $[0,1]$), we obtain at the end the needed isotopy of automorphisms of $X$ which moves $H$ out of $K$ while approximately fixing $K'$.
\end{proof}

\begin{theorem} \label{thm: typeII}
    Let $X$ be a Stein manifold and $H \subseteq X$ a complex hypersurface.  Assume that
    \begin{enumerate}
        \item $X \backslash H$ is a Stein manifold with the density property, and
        \item $X$ has the weak density property tangent to $H$, and
        \item $(X,H)$ satisfies Property (PO).
    \end{enumerate}
    Then, there exists a Fatou-Bieberbach domain of the second kind in $X$ which is contained in $X \backslash H$, Runge in $X \backslash H$, but non-Runge in $X$.
\end{theorem}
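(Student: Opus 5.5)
We will combine the Wold-type compact set from the proof of Theorem \ref{thm: typeI} with the push-out method, using Proposition \ref{prop:extraCond} to push $H$ away at each step. First, exactly as in Steps 2 and 3 of the proof of Theorem \ref{thm: typeI}, fix $p_0\in H$ and construct the disjoint union $W=W_1\cup W_2$ of two totally real disks in $D\backslash H$. This $W$ is holomorphically convex in $X\backslash H$, while $\widehat{W}_X\cap H\neq\emptyset$. Choose an exhaustion $K_1\subseteq K_2\subseteq\cdots$ of $X$ by $\holo(X)$-convex compacts with $\widehat W_X\subseteq K_1$. The goal is a sequence $\Phi_j\in\aut(X)$ whose push-out limit
\[
\Psi=\lim_{j\to\infty}\Phi_j\circ\cdots\circ\Phi_1
\]
exists on $X$ and is injective. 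We want $\Psi(X)\subseteq X\backslash H$, and we want $\Psi(X)$ to contain $W$, so that its hull in $X$ escapes it.

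Inductively, let $L_j\subseteq X\backslash H$ be compact sets, with $L_0=W$. Given $\Psi_j=\Phi_j\circ\cdots\circ\Phi_1$, set $K'=L_j\cup\Psi_j(K_j)$. We will arrange $\Psi_j(K_j)\cap H=\emptyset$ and $K=\widehat{K'}_X\cup K_{j+1}$, enlarged so that $H\cap K\ne\emptyset$. Proposition \ref{prop:extraCond} gives an isotopy $\alpha_t$ with $\alpha_1(H)\cap K=\emptyset$ and $\alpha_1(H)\cap\alpha_t(K')=\emptyset$. Conjugating, the set $\alpha_1^{-1}$ applied to $K'$ is carried back to $K'$ along the isotopy $\alpha_t^{-1}\circ\alpha_1$ restricted to $\alpha_1^{-1}$ of a neighborhood of $K'$. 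That isotopy avoids $H$, since $\alpha_1(H)\cap\alpha_t(K')=\emptyset$.

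Using Theorem \ref{weakAL-Theorem} (weak density property tangent to $H$, hypothesis (2)), we approximate, by an automorphism preserving $H$, a map equal to $\alpha_1^{-1}$ near $K'$. This needs the union of $K'$ with a far piece to be Runge, which is the usual Forstneri\v c–Rosay setup. The upshot is an automorphism $\Phi_{j+1}$ that is close to the identity on $K'$ and sends $K_{j+1}$ off $H$. The density property of $X\backslash H$ (hypothesis (1)) and Theorem \ref{andlemp} are used to keep $W$ fixed approximately inside $X\backslash H$. Choosing the approximation errors $\epsilon_j$ summable, the standard push-out lemma (Dixon–Esterle, Varolin \cite{MR1785520}) gives convergence of $\Psi_j$ on $X$ to an injective holomorphic $\Psi$. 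Its image $\Omega=\Psi(X)=\bigcup_j\Psi_j(K_j)^\circ$ equals the increasing union of the Runge sets $\Psi_j(\widehat{K_j})$. So $\Omega\cong X$, $\Omega\subseteq X\backslash H$, and $\Omega$ is Runge in $X\backslash H$ as a union of holomorphically convex sets in $X\backslash H$. Finally $W\subseteq\Omega$ by construction. If $\Omega$ were Runge in $X$, then, $\Omega$ being Stein, $\widehat W_X=\widehat W_\Omega\subseteq\Omega\subseteq X\backslash H$, which contradicts \eqref{WhullH}. Properness follows since $\Omega\cap H=\emptyset$.

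The main obstacle is the inductive step: simultaneously approximating the identity on the growing compact $\Psi_j(K_j)\cup W$ while pushing $H$ out of $K_{j+1}$. The naive isotopy of Property (PO) moves $K'$ too, and its images need not stay holomorphically convex in $X\backslash H$. I would handle this via Proposition \ref{prop:extraCond}, which keeps $\alpha_t(K')$ off the final hypersurface. I would then apply Theorem \ref{weakAL-Theorem} with respect to the hypersurface $\alpha_1(H)$, invoking Remark \ref{emptyInt}(2) to get the weak density property for $(X,\alpha_1(H))$. The delicate point is checking the Runge and Stein hypotheses for the neighborhoods along the isotopy, which I would arrange by taking small Stein Runge neighborhoods of $\holo(X)$-convex compacts.
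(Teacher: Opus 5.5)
There is a genuine gap: the construction you describe cannot exist. All your $\Phi_j$ lie in $\aut(X)$, and the image of a push-out limit of automorphisms of $X$ is always Runge in $X$. On $\Omega=\Psi(X)$ the maps $\Psi_j^{-1}$ converge to $\Psi^{-1}$. Hence for $f\in\holo(\Omega)$ the functions $(f\circ\Psi)\circ\Psi_j^{-1}\in\holo(X)$ converge to $f$ uniformly on compacts of $\Omega$. Equivalently, the sets $\Psi_j(\widehat{K_j})$ you use are $\holo(X)$-convex, not merely $\holo(X\backslash H)$-convex. Since $\Omega\cong X$ is Stein, $W\subseteq\Omega$ would force $\widehat W_X\subseteq\Omega$, so $\Omega\cap H\neq\emptyset$ by \eqref{WhullH}. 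So you can never have both $W\subseteq\Omega$ and $\Omega\subseteq X\backslash H$. Your last paragraph derives a contradiction from "$\Omega$ Runge in $X$", but your own $\Omega$ is Runge in $X$.

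Concretely, the inductive step fails where you apply Theorem \ref{weakAL-Theorem} with $K'\supseteq W$. That theorem needs a Stein Runge domain $U\supseteq K'$ of $X$ whose images along the isotopy avoid the preserved hypersurface. Set it up either way: with $\alpha_t$ preserving $\alpha_1(H)$, or with $\alpha_t^{-1}\circ\alpha_1$ preserving $H$. In both cases one endpoint condition reduces to $U\cap H=\emptyset$. But every Stein Runge $U\supseteq W$ contains $\widehat W_X$, which meets $H$.

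Invoking the density property of $X\backslash H$ to "keep $W$ fixed" does not rescue this. Automorphisms of $X\backslash H$ do not extend to $X$, so they cannot be inserted into a sequence in $\aut(X)$ whose limit is defined on all of $X$.

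The missing idea is to separate the two tasks, as the paper does.

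First run the push-out with only the exhaustion sets, never $W$, in the role of $K'$. Proposition \ref{prop:extraCond} and Theorem \ref{weakAL-Theorem}, applied to $(X,H_i)$ via Remark \ref{emptyInt}(2), give maps $\beta_i^{-1}\circ\alpha^{(i)}_1$. Each is close to the identity on $K_{i-1}$ and sends $H_{i-1}$ to $H_i$, which lies off $K_i$. The limit domain $\Omega$ is then a Fatou-Bieberbach domain of the second kind. It is Runge in $X$ and disjoint from $H$, hence Runge in $X\backslash H$.

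Only afterwards use hypothesis (1), exactly as in Step 4 of Theorem \ref{thm: typeI}, to obtain one $\Phi\in\aut(X\backslash H)$ with $\Phi(W)\subseteq\Omega$. Then $\Phi^{-1}(\Omega)\cong X$ lies in $X\backslash H$, is Runge in $X\backslash H$, and contains $W$. By \eqref{WhullH} it is non-Runge in $X$. The non-Runge behaviour must come from an automorphism of $X\backslash H$ that does not extend to $X$; elements of $\aut(X)$ can never produce it.
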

\begin{proof} 
    Let $K_0\subseteq K_1 \subseteq K_2 \subseteq \cdots $ be an exhaustion of $X$ by holomorphically convex compact subsets, where $K_0\cap H=\emptyset$. Set $H_0 =H$.

    Suppose that $H_0\cap K_1\neq \emptyset$. By our assumptions and Proposition \ref{prop:extraCond}, there exists a continuous isotopy $\alpha_t^{(1)}$ such that $\alpha_0^{(1)} = Id$, $\alpha_1^{(1)}(H_0) \cap K_1 = \emptyset$ and $\alpha_1^{(1)}(H_0) \cap \alpha_t^{(1)}(K_0) = \emptyset$ for all $t \in [0,1]$. Set $H_1=\alpha_1^{(1)}(H_0)$. By assumption (2) and Remark \ref{emptyInt} (2), $X$ has the weak density property tangent to $H_1$. 
    Let $U$ be an open and connected neighborhood of $K_0$ which is Runge in $X$ and $ \alpha^{(1)}_t (U) \cap H_1 = \emptyset$ for all $t \in [0,1]$.   Thus, the additional condition in Theorem \ref{weakAL-Theorem} is satisfied. We apply Theorem \ref{weakAL-Theorem} to the isotopy $\alpha^{(1)}_t \rvert_U : U \to X\ (t \in [0,1])$ to obtain a holomorphic automorphism $\beta_1$ of $X$ which preserves $H_{1}$ such that $\beta_1^{-1} \circ \alpha_1^{(1)}$ is close to the identity uniformly on $K_0$.

    Inductively, we construct $\alpha^{(i)}_t$ and $\beta_i$ such that the composition $\beta_i^{-1} \circ \alpha^{(i)}_1$ sends $H_{i-1}$ out of $K_{i}$ to $H_{i}$ and is close to the identity on $K_{i-1}$ (without loss of generality, we assume that $H_{i-1} \cap K_i\neq \emptyset$ for each $i\ge 1$). 
    The sequence of 
    \[
        \Psi_m := \beta_m^{-1} \circ \alpha_1^{(m)} \circ \beta_{m-1}^{-1} \circ \alpha_1^{(m-1)} \circ \cdots \circ \beta_1^{-1} \circ \alpha_1^{(1)},\ \ \ \ \ \ m\ge 1,
    \]
    converges uniformly on compact subsets in the domain $\Omega := \cup_{m=1}^\infty \Psi_m^{-1} (K_{m-1})$ to a limit $\Psi$, which maps $\Omega$ biholomorphically onto $X$ (see \cite{MR1760722}*{Proposition 5.1}). By our construction, the complement of $\Omega \subsetneq X$ contains a copy of $H$.

    Finally, following Steps 2-4 as in the proof of Theorem \ref{thm: typeI}, we get an automorphism $\Phi$ such that $\Phi^{-1}(\Omega)$ is a Fatou-Bieberbach domain of the second kind in $X$ which is contained in $X \backslash H$, Runge in $X \backslash H$, but non-Runge in $X$.
\end{proof}

\section{Examples} \label{sec: example}

\subsection{The first kind}
We apply Theorem \ref{thm: typeI} to produce non-Runge Fatou-Bieberbach domains of the first kind in $SL_n(\C), \, n \ge 2$, and the Koras-Russel cubic threefold. 

\begin{lemma} \label{DP-SLnH}
    Let $H = \{A \in SL_n(\C):\ \tr A =0\}$. Then, $SL_n(\C) \backslash H$ has the density property.
\end{lemma}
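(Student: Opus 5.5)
The plan is to verify the hypotheses of the criterion of Kaliman and the second author (Theorem \ref{theorem:KalKutHolomorph}) for the Stein manifold $X := SL_n(\C)\backslash H$. Since $H=\{\tr=0\}$ is a hypersurface, $X$ is a principal open subset of the affine variety $SL_n(\C)$ and hence Stein. The difficulty is that most natural complete fields on $SL_n(\C)$ do not preserve $X$. So I would work only with fields $V$ on $SL_n(\C)$ satisfying one of two conditions: either $V(\tr)=0$, or $V(\tr)=c\cdot\tr$ for a constant $c$. In the first case every flow line stays in a level set $\{\tr = c\}$. In the second case the trace is multiplied by $e^{ct}$ along the flow. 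Either way a complete field on $SL_n(\C)$ restricts to a complete field on $X$.

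\textbf{Step 1: supply of complete fields.} The first family consists of the conjugation fields $A\mapsto [N,A]$ for nilpotent $N\in\mathfrak{sl}_n$, whose flows $A\mapsto e^{tN}Ae^{-tN}$ preserve trace. Multiplying such a field by a function in its kernel keeps it complete. The kernel contains the functions $g(\tr A)$ and, for $N=e_{ij}$, suitable matrix-entry combinations. Second, I would look for left/right multiplication fields $A\mapsto e_{ij}A$ (for $i\neq j$), multiplied by kernel functions chosen so that the trace derivative $a_{ji}$ is killed. Third, I need at least one field that changes the trace while keeping it nonzero. The natural candidate is built from diagonal torus actions composed with conjugation, for example $A\mapsto \tr(A)\,(\ldots)$-type shears whose derivative of $\tr$ is proportional to $\tr$. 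Finding an explicit such field is where I expect to spend effort.

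\textbf{Step 2: transitivity of $\aut(X)$.} Using the conjugation action together with the trace-preserving shears $A\mapsto e^{t f(A) e_{ij}}A$ (with $f$ in the appropriate kernel), I would first show that the automorphisms act transitively on each level set $\{\det=1,\ \tr=c\}$ for $c\ne 0$. These level sets are smooth affine varieties of dimension $n^2-2$. Then the trace-scaling field from Step 1 moves between the different values of $c\in\C^*$, which gives transitivity on $X$.

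\textbf{Step 3: compatible pairs and generating set.} For pairs of commuting-type nilpotent directions, say $V=f\cdot[e_{ij},\,\cdot\,]$ and $W=g\cdot[e_{kl},\,\cdot\,]$, I would mimic the standard compatible pairs used for $SL_n(\C)$ itself. There one checks that $\ker V\cdot\ker W$ spans a dense subalgebra containing a nontrivial ideal: the kernels contain $\tr$, its inverse $1/\tr$ (which is regular on $X$), and enough entries to separate points generically. One also exhibits $h\in\ker V$ with $W(h)\in\ker W\backslash\{0\}$, typically an entry whose derivative is another entry. Finally, at a convenient base point such as a diagonal matrix with distinct eigenvalues and nonzero trace, the values $V_i(x)$, together with their images under the isotropy group, should span $T_xX$. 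The isotropy group contains the conjugating torus and the Weyl group permutations, and the transverse direction changing the trace is supplied by the trace-scaling field.

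The main obstacle is Step 1: producing complete fields that change the trace while never letting it reach $0$, and showing that they fit into compatible pairs. If no clean explicit field appears, I would look for a fibration of $X$ over $\C^*$ given by $\tr$ whose fibers are uniformly isomorphic via scaling-type automorphisms. I would then obtain the transverse completeness from the $\C^*$-factor, which has the density property when combined with fibers, in the spirit of $\C^*\times\C$.
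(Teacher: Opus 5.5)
\textbf{Verdict: genuine gap.} Your framework is the one the paper uses: verify the hypotheses of Theorem~\ref{theorem:KalKutHolomorph} for $SL_n(\C)\backslash H$. But the step your plan depends on is left unresolved, namely complete fields on $SL_n(\C)\backslash H$ that change the trace. Without them you get neither transitivity nor a compatible pair.

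\textbf{The missing idea.} Multiply the row (or column) operation fields by the trace itself, not by kernel functions. Let $E$ generate $A\mapsto e^{te_{ji}}A$, i.e.\ add row $i$ to row $j$, with $i\neq j$. Then $E(\tr)=a_{ij}$ and $E(a_{ij})=0$. Hence $V_{ij}=\tr\cdot E$ is complete on $SL_n(\C)$, and along its flow $\tr(t)=\tr(0)\,e^{a_{ij}t}$. So $V_{ij}$ vanishes on $H$ and restricts to a complete field on the complement.

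Your admissibility condition ($V(\tr)=0$, or $V(\tr)=c\,\tr$ with $c$ constant) is what hides this. All you need is a complete field on $SL_n(\C)$ tangent to $H$; here $V(\tr)=a_{ij}\tr$ with $a_{ij}\in\ker V$. Multiplying $E$ by some $f\in\ker E$ gives $(fE)(\tr)=fa_{ij}$, so the trace derivative can never be ``killed'' that way. Trace-preserving fields other than conjugations need combinations such as $a_{23}(\text{add row }2\text{ to row }1)-a_{21}(\text{add row }2\text{ to row }3)$, which is the paper's $W$ for $n\ge 3$.

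With the $V_{ij}$ and their column analogues, the paper does two things. It proves transitivity by an explicit elimination that moves any $A$ with $\tr A\neq 0$ into a small neighbourhood of $Id$. It obtains a compatible pair by pairing the trace-changing $V_{12}$ with a trace-preserving $W$, using $\tr^{\pm1}\in\ker W$. For $n=2$ that $W$ is the conjugation field $[e_{12},\cdot\,]$.

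\textbf{Other steps that fail as written.}
\begin{enumerate}
\item \emph{Fallback fibration.} The trace is not a fibration $SL_n(\C)\backslash H\to\C^*$. On $T_A SL_n(\C)=\{AM:\tr M=0\}$, $d\tr$ vanishes exactly when $A$ is scalar. So the central elements $\omega\, Id$ with $\omega^n=1$ are critical points lying in the complement, and the fibres over $n\omega$ are singular. Also, $A\mapsto\lambda A$ does not preserve $\det A=1$.
\item \emph{Step 2.} The same fact refutes it. A trace-preserving automorphism must fix $\omega\, Id$, the only critical point in its fibre. So conjugations and trace-preserving shears cannot act transitively on $\{\tr=n\omega\}$.
\item \emph{Step 3, compatible pairs.} Two conjugation fields need not be compatible. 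For $n=2$, the kernel of $[e_{12},\cdot\,]$ consists of holomorphic functions of $(a_{21},\tr)$, and that of $[e_{21},\cdot\,]$ of functions of $(a_{12},\tr)$. Everything in the closed span of their products is therefore invariant under $a_{11}\leftrightarrow a_{22}$. Such a subspace contains no nonzero ideal: if $f$ and $fa_{11}$ are both invariant, then $f(a_{11}-a_{22})=0$. ``Separating points generically'' is not the relevant condition.
\item \emph{Step 3, generating set.} Conjugation by permutation matrices does not fix a diagonal matrix with distinct eigenvalues. The paper avoids this by taking the base point $Id$. Every inner automorphism fixes $Id$ and preserves $H$, and the adjoint representation on $\mathfrak{sl}_n$ is irreducible. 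So any single complete field not vanishing at $Id$, e.g.\ $V_{12}$, is a generating set.
\end{enumerate}
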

\begin{proof}
Let $X = SL_n(\C)$ and $A = (a_{ij})_{i,j=1}^n\in X$. Set $h=\tr A$. We verify the conditions of Theorem \ref{theorem:KalKutHolomorph}.
\smallskip

(a) At the identity $Id \in X$, since the adjoint representation of $X$ is irreducible, the span of any orbit of the adjoint action  in $T_{Id} X \backslash \{0 \}$ is $T_{Id} X $. Note that any inner automorphism of $X$ preserves the hypersurface $H$. For any complete vector field $V$ nonvanishing at $Id$, since its orbit under the adjoint action contains a set of complete vector fields spanning the tangent space, $V$ is a generating set for $T_{Id}(X \backslash H)$. Moreover, these vector fields also span the tangent space at any point in an open $\varepsilon_0$-neighborhood $U$ of $Id$ for $\varepsilon_0 >0$ small enough. 
\smallskip

(b) We show that $\mathrm{Aut}(X \backslash H)$ acts transitively on $X \backslash H$.
Consider the following complete vector fields
\begin{align*}
    V_{ij} &= \left(\sum_k a_{kk} \right) \left( a_{i1} \frac{\partial}{\partial a_{j1}} + a_{i2} \frac{\partial}{\partial a_{j2}} + \dots + a_{in} \frac{\partial}{\partial a_{jn}}   \right), \\
    W_{ij} &= \left(\sum_k a_{kk} \right) \left( a_{1i} \frac{\partial}{\partial a_{1j}} + a_{2i} \frac{\partial}{\partial a_{2j}} + \dots + a_{ni} \frac{\partial}{\partial a_{nj}}   \right), \quad 1 \le i \neq j \le n.
\end{align*}
The flow of $V_{ij}$ adds multiple of the $i$th row to the $j$th row, whereas the flow of $W_{ij}$ adds multiple of the $i$th column to the $j$th column. 

More precisely, the flow of $V_{ij}$ is 
\begin{align*}
    a_{jj}(t) &= a_{jj}(0) +  h(0) ( e^{a_{ij}t} -1 ), \\
    a_{jk}(t) &= \begin{cases}
            a_{jk}(0) + h(0) a_{ik}  \frac{e^{a_{ij}t}-1}{a_{ij}} & \text{if } a_{ij} \neq 0 \\
            a_{jk}(0) + h(0) a_{ik} t & \text{if } a_{ij} = 0
        \end{cases}  , \quad  k \neq j, \\
    a_{kl}(t) &= a_{kl}(0), \quad k \neq j, \quad 1 \le l \le n. 
\end{align*}
If $a_{ik} \neq 0$ and $a_{ij} = 0$, we can change $a_{jk}$ to any complex number using the flow map of $V_{ij}$; if $a_{ik}, a_{ij} \neq 0$, we can send $a_{jk}$ to any complex number except $a_{jk}-h(0) a_{ik}/a_{ij}$. The flow maps of $W_{ij}$ are analogous.
\medskip

We show that flows of $V_{ij}$ and $W_{ij}$ can take any $A \in X \backslash H$ to the open $\varepsilon_0$-neighborhood $U$ of $Id$. 

First, we bring the first column to $(1+\varepsilon_1, 0, \dots, 0)^t$ and the first row to $(1+\varepsilon_1, 0, \dots, 0)$ for some $\varepsilon_1  \ge 0$ small. Assume that some entry $a_{i1} \neq 0, \, i \neq 1$, otherwise use the flow of $W_{i1}$ so that $a_{i1} \neq 0$. Write 
$$
    a_{11}=a, \, a_{i1}=b, \, h = c \neq 0.
$$ 
Choose $0 \le \varepsilon_1 < \varepsilon_0$ so that
\begin{align} \label{cond1}
    1 + \varepsilon_1 \neq a - c.
\end{align}
Since the exponential map $\exp : \C \to \C^*$ is surjective, there exists $t_1 \in \C$ such that the time-$t_1$ flow map of $V_{i1}$
\begin{align*}
    a_{11}(t_1) = a + c (e^{bt_1} -1) = 1+\varepsilon_1.
\end{align*}
Then, we use the entry $a_{11}= 1+\varepsilon_1$ to make other entries in the first column to zero. Let $j \neq 1$. 
If $a_{1j} = 0$, then the flow of $V_{1j}$ can take $a_{j1}$ to $0$. If $a_{1j} \neq 0$, then we 
require further that 
\begin{align} \label{cond2}
    (1+\varepsilon_1)(1+\varepsilon_1 + c - a)/a_{1j} \neq b.
\end{align}
Then, there exists a $t_2 \in \C$ such that the time-$t_2$ flow map of $V_{1j}$
\[
    a_{j1}(t_2) = a_{j1}(0) + h(0) a_{11}  \frac{e^{a_{1j}t_2}-1}{a_{1j}} = 0.
\]
For the first row, since now the entries $a_{j1}, j = 2, 3, \dots, n$, are all zero, the flow of $W_{1j}$ can send $a_{1j}$ to $0$ with similar conditions on $\varepsilon_1$ as \eqref{cond2}. Since the conditions \eqref{cond1} and \eqref{cond2} are polynomials in $\varepsilon_1$, there is such an $\varepsilon_1 < \varepsilon_0$  which can be chosen to be arbitrarily close to zero. 

Repeat the same process for the square matrix $(a_{ij})_{i,j=2}^n$ of size $n-1$, and then for $(a_{ij})_{i,j=3}^n$ of size $n-2$, and so on. Finally, we obtain a diagonal matrix $B = \textup{diag}(1+\varepsilon_1, 1+ \varepsilon_2, \dots, 1+\varepsilon_{n-1}, a_{nn}), \, 0 \le \varepsilon_i < \varepsilon_0$. Since $\det B= 1$, we have $\lvert a_{nn} - 1 \rvert < \varepsilon_0$ when $\varepsilon_i$'s are small enough. Since $U$ is contained in the orbit of $Id$ under the automorphism group $\mathrm{Aut}(X \backslash H)$, the transitivity follows.
\smallskip

(c) We have the following pair of complete vector fields on $X$ which are tangent to $H$: $V = V_{12}$ and
\begin{align*}
        W = a_{21} \frac{\partial}{\partial a_{11}}+(a_{22} - a_{11})\frac{\partial}{\partial a_{12}} - a_{21} \frac{\partial}{\partial a_{22}}
\end{align*}
for $n = 2$ and
\begin{align*}
    W = & \,\, a_{23} \left( a_{21} \frac{\partial}{\partial a_{11}} + a_{22} \frac{\partial}{\partial a_{12}} + \dots + a_{2n} \frac{\partial}{\partial a_{1n}}   \right) \\
    &- a_{21} \left( a_{21} \frac{\partial}{\partial a_{31}} + a_{22} \frac{\partial}{\partial a_{32}} + \dots + a_{2n} \frac{\partial}{\partial a_{3n}}   \right).
\end{align*}
for $n \ge 3$. 
The function $a_{11}$ satisfies  $a_{11} \in \ker V$ and $W(a_{11}) \in \ker W \backslash \{0 \}$. Note that $h = \sum_i a_{ii}  \in \ker W$, i.e.\@ the flow of $W$ preserves $\{h = \text{const}\}$, thus $h^k \in \ker W$ for $k \in \Z$. For $n \ge 3$, we have $\C[a_{21},\dots,a_{2n}, h^{-1}] \subseteq \ker W$, hence $\textup{Span}(\ker V \cdot \ker W) \supset \C[X \backslash H] \cong \C[X][h^{-1}]$. As for $n=2$, one can get the same inclusion from $a_{11}, a_{12} \in \ker V$ and $a_{21}, a_{11} + a_{22} \in \ker W$. Taking the closure over the compact-open topology, we get 
    \[
        \overline{\mathrm{Span}}(\ker V \cdot \ker W) = \holo( X \backslash H).
    \] 
    This gives a compatible pair for $X \backslash H$.  
\end{proof}

Combining Theorem \ref{thm: typeI} and Lemma \ref{DP-SLnH}, we get the following.

\begin{theorem} \label{example first kind}
    Let $H=\{ A \in SL_n(\C):\ \tr A=0\}$. Then, there exists a Fatou-Bieberbach domain of the first kind in $SL_n(\C) \backslash H$, which is Runge in $SL_n(\C) \backslash H$ but non-Runge in $SL_n(\C)$. 
\end{theorem}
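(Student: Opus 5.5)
The statement follows from Theorem \ref{thm: typeI} applied with $X = SL_n(\C)$ and $H = \{A \in SL_n(\C) : \tr A = 0\}$. So the plan is to check the three hypotheses of that theorem and then invoke it.

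First, $SL_n(\C)$ is a closed affine algebraic subvariety of $\C^{n^2}$, given as the zero set of $\det A - 1$. It is smooth, since $d(\det)$ does not vanish on $SL_n(\C)$. Hence it is a Stein manifold.

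Second, $H$ is the zero set in $SL_n(\C)$ of the holomorphic function $h = \tr A$. This function is not identically zero on $SL_n(\C)$, because $\tr Id = n \neq 0$. It does vanish somewhere: for $n \ge 2$, take a diagonal matrix with entries $1, -1, \dots$ suitably adjusted, or a companion-type matrix, so that the trace is zero and the determinant is one. Since $SL_n(\C)$ is connected, $H$ is a complex hypersurface, and it is nonempty. The fact that $dh$ and $d\det$ are independent along $H$ is not needed, since Theorem \ref{thm: typeI} only uses $H$ as the zero set of a defining function $h$ near some point $p_0$.

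There is one detail to watch in Step 2 of Theorem \ref{thm: typeI}: it requires a point $p_0 \in H$ with local coordinates in which $H = \{z_1 = 0\}$. For this I will pick a smooth point of $H$. Such points exist because a hypersurface in a manifold is smooth off a proper analytic subset. In fact $H$ is smooth everywhere, since $dh$ is proportional to $d\det$ only at scalar matrices, and a scalar matrix has nonzero trace.

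Third, the complement $SL_n(\C) \backslash H = \{h \neq 0\}$ is the complement of a principal divisor in a Stein manifold, so it is Stein. It has the density property by Lemma \ref{DP-SLnH}.

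With all hypotheses in place, Theorem \ref{thm: typeI} produces a Fatou-Bieberbach domain of the first kind in $SL_n(\C) \backslash H$ that is Runge there and non-Runge in $SL_n(\C)$. The only genuine content is Lemma \ref{DP-SLnH}, which is already established, so no step here is an obstacle. The one point that needs care is the smoothness and local-coordinate requirement on $H$ at $p_0$, and the computation above settles it.
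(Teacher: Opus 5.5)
Your proposal is correct and is the paper's argument: the paper obtains this theorem directly by combining Theorem \ref{thm: typeI} with Lemma \ref{DP-SLnH}. Your extra checks (Steinness of $SL_n(\C)$ and of $\{\tr A \neq 0\}$, nonemptiness of $H$, and smoothness of $H$ because $d(\tr A)$ and $d(\det A)$ are proportional only at scalar matrices) make explicit hypotheses the paper leaves implicit. One correction: that smoothness computation, which shows $h=\tr A$ vanishes to first order along $H$, is exactly what Step 3 of Theorem \ref{thm: typeI} uses when it says $h$ and $z_1\circ\varphi$ differ by a nonvanishing factor, so it is not dispensable as you suggest, though you prove it anyway.
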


% Testing water
Our next example is the Koras-Russel cubic threefold $KR$, a smooth complex hypersurface in $\C^4$ given by $\{ x^2 y + x + s^2 + t^3 = 0 \}$. It is known that $KR$ has the density property by Leuenberger and it is open whether $KR$ is biholomorphic to $\C^3$. 
The following vector fields are complete on $KR$ (see \cite{MR3513546}*{Lemma 3.1}):
\begin{align*}
    V_x &= - 2s \frac{\partial}{\partial y} + x^2 \frac{\partial}{\partial s}, \quad V_x' = - 3t^2 \frac{\partial}{\partial y} + x^2 \frac{\partial}{\partial t}, \\
    V_z &= -(s^2+t^3) x\frac{\partial}{\partial x} + \left( 2(s^2+t^3)y - xy -1 \right) \frac{\partial}{\partial y}. 
\end{align*}

The hypersurface $H := \{x = 0\} \cap KR$ is isomorphic to 
$$
    \C_y \times \{(s,t) \in \C^2: s^2 + t^3 = 0 \}.
$$ 
Note that $x V_x, xV_x', V_z$ induce complete vector fields on $KR \backslash H$.

\begin{lemma} \label{lem: KR-DP}
    $KR \backslash H$ has the density property. 
\end{lemma}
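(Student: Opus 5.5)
We verify the hypotheses of Theorem \ref{theorem:KalKutHolomorph} for $X' := KR\backslash H = KR\cap\{x\neq 0\}$, following the pattern of Lemma \ref{DP-SLnH}.

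\emph{Transitivity.} On $X'$ the equation can be solved for $y$:
\[
y=-(x+s^2+t^3)/x^2 ,
\]
so $X'$ is isomorphic to $\C^*_x\times\C^2_{s,t}$ via $(x,y,s,t)\mapsto (x,s,t)$. In these coordinates $xV_x = x^3\partial_s$ and $xV_x'=x^3\partial_t$, since the $\partial_y$ components are determined by the equation. Their flows are $s\mapsto s+\tau x^3$ and $t\mapsto t+\tau x^3$, and since $x\neq0$ they act transitively on each slice $\{x=\text{const}\}$. Next we restrict $V_z$ to $X'$. The field $V_z$ has $\dot x = -(s^2+t^3)x$, which on $X'$ equals $(x+x^2y)x$. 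Starting from a point with $s=t=0$ (so $x^2y=-x$), one checks that the flow of $V_z$ fixes that point. Instead, I would take a point with $s^2+t^3=c\neq 0$, where $\dot s=\dot t=0$ and $\dot x=-cx$. There the flow $x\mapsto e^{-c\tau}x$ reaches every value of $\C^*$. Composing these flows with the $s,t$-translations shows that the automorphism group acts transitively on $X'$. Alternatively, I would simply use the identification $X'\cong\C^*\times\C^2$, which is known to have the density property by Varolin \cite{MR1829353}. That shortcut may be exactly the intended proof: check that $y$ is a regular function on $X'$ in $x^{\pm1},s,t$, so $X'\cong \C^*\times\C^2$ algebraically, and invoke density of $\C^*\times\C^2$.

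\emph{Compatible pairs and generating set.} If the direct criterion route is taken, I would take the pairs $(\partial_s$-type$,\partial_t$-type$)$ in the coordinates $(x,s,t)$, namely $V=x^3\partial_s$ and $W=x^3\partial_t$. Here $\ker V\supset \C[x^{\pm1},t]$ and $\ker W\supset\C[x^{\pm1},s]$, so the span of products is $\C[x^{\pm1},s,t]$, which is dense in $\holo(X')$. For this pair, $h=x^{-3}t\in\ker V$ satisfies $W(h)=1$. Symmetrically, the pair $(W,V)$ works. For the $x$-direction I would use $V=x\partial_x$, which is complete on $\C^*$, paired with $W=x^3\partial_s$. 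Then $\ker V\supset\C[s,t]$ and $\ker W\supset \C[x^{\pm1},t]$, with $h=s\in\ker V$ and $W(s)=x^3\in\ker W$. The three fields $x\partial_x,x^3\partial_s,x^3\partial_t$ span $T_pX'$ at any point, so they form a generating set.

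The main obstacle is confirming that $x\partial_x$ in these coordinates is a genuine complete field on $X'$, that is, pulling it back to $KR$ coordinates with the correct $\partial_y$ component. This follows from the isomorphism, so the key step is the algebraic isomorphism $X'\cong\C^*\times\C^2$.
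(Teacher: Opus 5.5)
Your proof is correct, but it takes a different and shorter route than the paper's. You observe that on $\{x\neq 0\}$ one can solve $y=-(x+s^2+t^3)/x^2$, so $(x,y,s,t)\mapsto(x,s,t)$ is an algebraic isomorphism $KR\backslash H\cong\C^*\times\C^2$. The paper never uses this. With it, the lemma reduces to the density property of $\C^*\times\C^2$, which is biholomorphically invariant and known. It also follows from the paper's own Proposition \ref{lem: ADP-XxC*}(1) applied to the flexible variety $X=\C^2$.

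Your direct check of Theorem \ref{theorem:KalKutHolomorph} in these coordinates is also fine. The pairs $(x^3\partial_s,x^3\partial_t)$, $(x^3\partial_t,x^3\partial_s)$ and $(x\partial_x,x^3\partial_s)$ are compatible, with $h=x^{-3}t$, $x^{-3}s$ and $s$ respectively. Their first members span the tangent space at every point, so no isotropy computation is needed.

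The paper instead works only with Leuenberger's fields:
\begin{enumerate}
\item transitivity comes from $xV_x$, $xV_x'$, $V_z$, which is essentially your first argument;
\item it uses a single compatible pair $(V_z,xV_x)$ with $h=s$;
\item it shows $\{V_z\}$ is a generating set at $p=(1,-3,1,1)$ by computing the differentials at $p$ of the time-$1$ flows of $(x-1)xV_x$ and $(x-1)xV_x'$, both of which fix $p$.
\end{enumerate}

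Your route is more transparent: it shows the complement is nothing more exotic than $\C^*\times\C^2$. The paper's fields have one small advantage: they are complete on all of $KR$ and tangent to $H$. Your $x\partial_x$, written in the coordinates of $KR$, has $\partial_y$-component $-1/x-2y$ and does not extend across $H$. This is irrelevant for the lemma, which concerns only $KR\backslash H$.
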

\begin{proof}
We verify the conditions of Theorem \ref{theorem:KalKutHolomorph}.

(a) Let $F_c = \{ x = c \neq 0 \} \subseteq KR$. Since the complete vector fields $xV_x, x V_x'$ span the tangent space at any point in $F_c$, the automorphism group $\aut(KR \backslash H)$ acts transitively on $F_c$. 
There exists $p \in F_c$ such that $V_z$ is transversal to $F_c$ at $p$ (when $s^2+t^3\neq 0$). Thus all $\aut(KR \backslash H)$-orbits are open and the same orbit. 

(b) Consider the pair $(V_z, x V_x)$ of complete vector fields on $KR \backslash H$. The function $s$ satisfies $xV_x(s) \in \ker xV_x \backslash \{ 0 \}$ and $s \in \ker V_z$. Moreover, since $x, t \in \ker xV_x$ we also get 
$y = -(x+s^2+t^3)/x^2 \in \textup{Span}(\ker xV_x \cdot \ker V_z)$. Thus, 
$$\textup{Span}(\ker xV_x \cdot \ker V_z) \supset \C[KR \backslash H] \cong \C[KR][x^{-1}]$$ 
and $(V_z, x V_x)$ is compatible. 

(c) We show that $\{ V_z \}$ is a generating set at $p=(x,y,s,t)=(1,-3,1,1)$. The time-$1$ flow of $(x-1)xV_x$ fixes $p$ and its differential at this point is an endomorphism of the tangent space sending (cf.\@ \cite{nTuple}*{Lemma 3.1})
    \begin{align*}
        V_z (p) \mapsto V_z (p) - 2 xV_x(p) = -2 \frac{\partial}{\partial x} - 6 \frac{\partial}{\partial y} - 2 \frac{\partial}{\partial s}.
    \end{align*}
Similarly, the time-$1$ flow of $(x-1)xV_x'$ fixes $p$ and its differential at $p$ is an endomorphism of the tangent space sending
    \begin{align*}
        V_z (p) \mapsto V_z (p) - 2 xV_x'(p) = -2 \frac{\partial}{\partial x} - 4 \frac{\partial}{\partial y} - 2 \frac{\partial}{\partial t}.
    \end{align*}
These two vectors together with 
$$
    V_z(p) = -2 \frac{\partial}{\partial x} -10 \frac{\partial}{\partial y}
$$ 
form a basis of $T_p (KR\backslash H)$. 
\end{proof}

Combining Theorem \ref{thm: typeI} and Lemma \ref{lem: KR-DP}, we get the following.
\begin{theorem}
    Let $H=\{ (x,y,s,t) \in KR:\ x=0 \}$. Then, there exists a Fatou-Bieberbach domain of the first kind in $KR \backslash H$, which is Runge in $KR \backslash H$ but non-Runge in $KR$. 
\end{theorem}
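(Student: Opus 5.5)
This follows immediately by combining Theorem \ref{thm: typeI} with Lemma \ref{lem: KR-DP}. I will check the hypotheses of Theorem \ref{thm: typeI} with $X = KR$ and $H = \{x=0\}\cap KR$.

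First, $X$ is a Stein manifold. $KR$ is a closed complex submanifold of $\C^4$. Smoothness follows because the gradient $(2xy+1,\, x^2,\, 2s,\, 3t^2)$ of $x^2y+x+s^2+t^3$ vanishes only where $x=s=t=0$ and $2xy+1=0$, which is impossible. Closed submanifolds of $\C^4$ are Stein.

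Second, $H$ is a complex hypersurface in $KR$. It is the zero set of the holomorphic function $x|_{KR}$, which is not identically zero. By the description above, $H\cong \C_y\times\{s^2+t^3=0\}$, which has pure dimension $2 = \dim KR - 1$. Theorem \ref{thm: typeI} only uses $H$ through a local defining function near a smooth point $p_0$. At a smooth point we may take the chart $\varphi$ with $z_1 = x$. This works because $dx \neq 0$ on $TKR$ at points of $H$ with $s^2+t^3=0$, $(s,t)\neq 0$: there the tangent space is $\{dx + 2s\,ds + 3t^2\,dt = 0\}$, since $x=0$ kills the $dy$ coefficient. For instance $p_0=(0,0,-1,1)$ works. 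In Step 2 of the proof of Theorem \ref{thm: typeI}, we simply choose $p_0$ in the regular part of $H$. Globally, the defining function is $h = x$, so Step 3 applies verbatim.

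Third, $X\setminus H$ is Stein with the density property. It is the complement of the zero set of the holomorphic function $x$ in a Stein manifold, hence Stein. The density property is exactly Lemma \ref{lem: KR-DP}.

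With these hypotheses in place, Theorem \ref{thm: typeI} gives a Fatou-Bieberbach domain of the first kind in $KR\setminus H$ that is Runge in $KR\setminus H$ and non-Runge in $KR$. The only point needing care is that $H$ is singular along $\{x=s=t=0\}$. I handle this, as above, by choosing $p_0$ at a regular point of $H$, where $x$ serves as the first coordinate of a local chart. The hull argument of Step 4 then goes through unchanged.
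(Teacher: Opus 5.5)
Your proposal is correct and is essentially the paper's argument: the paper obtains this theorem by combining Theorem \ref{thm: typeI} with Lemma \ref{lem: KR-DP}. Your extra observation that $p_0$ must be chosen in the regular part of the singular hypersurface $H$, using the chart with $z_1 = x$, is a legitimate point the paper leaves implicit. One small slip: $(0,0,-1,1)$ is not on $KR$, since $s^2+t^3=2$ there; take instead e.g.\ $p_0=(0,0,1,-1)$.
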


\subsection{The second kind}

A smooth affine algebraic variety $X$ has the {\bf algebraic density property} if the Lie algebra of $\C$-complete algebraic vector fields on $X$ contains all algebraic vector fields on $X$. 

The algebraic density property of $X$ implies the density property for $X$, since holomorphic vector fields on $X$ can be approximated by polynomial vector fields on $X$ uniformly on compact subsets of $X$. The same holds for the algebraic density properties introduced below (cf.\@ \cite{MR3320241}*{Proposition 6.2}).

Let $X$ be an affine algebraic variety, and $ Y \subseteq X$ an algebraic subvariety containing the singularity part $X_{sing}$. Denote by $I_Y\subseteq \C[X]$ the ideal of $Y$, and by $\vf_{alg}(X)$ the $\C[X]$-module of algebraic vector fields on $X$.
Let $\vf_{alg}(X,Y)$ be the $\C[X]$-module of vector fields vanishing on $Y$:
\[	
	\vf_{alg}(X,Y)=\{V \in \vf_{alg}(X) : V(\C[X])\subseteq I_Y\},
\]
and $\vf^Y_{alg}(X)$ the $\C[X]$-module of of algebraic vector fields on $X$ tangent to $Y$:
\[
    \vf^Y_{alg}(X)= \{V \in \vf_{alg}(X) : V(y) \in T_y Y \, \, \forall y \in Y \}.
\]

Denote by $\lie_{alg}(X,Y)$ the Lie algebra generated by complete vector fields in $\vf_{alg}(X,Y)$ and by $\aut^Y_{alg}(X)$ the group of algebraic automorphisms of $X$ preserving $Y$
\[
    \aut^Y_{alg}(X) = \{ \alpha \in \aut_{alg}(X) : \alpha(Y) \subseteq Y \}. 
\]

\begin{definition}[Kutzschebauch-Leuenberger-Liendo \cite{MR3320241}] \label{def:relADP} 
We say that $X$ has the {\bf algebraic density property relative to $Y$}, or $(X,Y)$ has the relative algebraic density property, if there exists an integer $\ell\geq 0$ such that $I_Y^\ell \vf_{alg}(X,Y) \subseteq \lie_{alg}(X,Y)$.
\end{definition}

We also introduce the algebraic counterpart of the weak density property. 
\begin{definition} \label{weakADP}
    We say that $X$ has the {\bf weak algebraic density property tangent to} $Y$, or $(X,Y)$ has the weak algebraic density property, if there exists a coherent sheaf $\mathcal{W}$ of $\C[X]$-submodules of $\vf^Y_{alg}(X)$ such that
    \begin{enumerate}
        \item every global section of $\mathcal{W}$ is contained in the Lie algebra generated by complete vector fields in $\vf^Y_{alg}(X)$, and 
        \item for every $x \in X \backslash Y$, the stalk of $\mathcal{W}$ at $x$ coincides with that of $\mathcal{T}_X$.
    \end{enumerate}
\end{definition}
As in the holomorphic case, the relative algebraic density property of $(X, Y)$ implies the weak algebraic density property.

\begin{theorem} \label{weakADP-criterion}
    Let $X$ be an affine algebraic variety, and $Y\subseteq X$ an algebraic subvariety containing the set $X_{sing}$ of singularities of $X$. Suppose that
    \begin{enumerate}
        \item $\aut^Y_{alg}(X)$ acts transitively on $X \backslash Y$, and 
        \item there are compatible pairs $(V_1,W_1), (V_2, W_2), \dots, (V_m, W_m)$ of complete vector fields in $\vf_{alg}^Y(X)$, and
        \item there exists a point $x \in X \backslash Y$ such that the vectors $V_1(x), \dots, V_m(x)$ form a generating set (w.r.t.\@ $\aut^Y_{alg}(X)$) for $T_x X$.
    \end{enumerate}
    Then $X$ has the weak algebraic density property tangent to $Y$.  
\end{theorem}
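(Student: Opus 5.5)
The plan is to mirror the proof of Theorem \ref{weakDP-criterion}, which itself rests on the first part of the proof of \cite{MR3320241}*{Theorem 2.2}. This time we stay in the algebraic category, where that argument was originally written. Let $\mathfrak{L}$ be the Lie algebra generated by complete vector fields in $\vf^Y_{alg}(X)$. The goal is to show that $\mathfrak{L}$ contains a nonzero $\C[X]$-submodule $M \subseteq \vf^Y_{alg}(X)$ with the following property: at some point $x \in X\backslash Y$, the values of elements of $M$ span $T_xX$. We then enlarge $M$, using the automorphism group, to a module whose associated coherent sheaf $\mathcal{W}$ agrees with $\mathcal{T}_X$ on $X \backslash Y$.

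\textbf{Step 1 (modules from compatible pairs).} Take a compatible pair $(V,W)$ of complete fields in $\vf^Y_{alg}(X)$, and let $h \in \ker V$ with $W(h) \in \ker W\backslash\{0\}$. For $f \in \ker V$ and $g \in \ker W$, the fields $fV$, $hfV$, $gW$, $hgW$ are complete, since multiplying by a kernel element preserves completeness. They are also tangent to $Y$, because $V$ and $W$ are. The standard bracket identity
\[
[fV, hgW] - [hfV, gW] = fg\,W(h)\,V
\]
shows that $fg\,W(h)\,V \in \mathfrak{L}$. Taking spans gives $W(h)\,\mathrm{Span}(\ker V\cdot\ker W)\,V \subseteq \mathfrak{L}$. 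In the algebraic setting no closure is needed, because the ideal $I$ is contained in the span itself. Hence $\mathfrak{L}$ contains the $\C[X]$-module $W(h) I\, V$, which is nonzero. Doing this for each pair $(V_i,W_i)$ yields modules $I_iV_i$ (absorbing $W_i(h_i)$ into a nonzero ideal) contained in $\mathfrak{L}$.

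\textbf{Step 2 (spreading by automorphisms and generating).} The group $\aut^Y_{alg}(X)$ preserves $Y$, so it maps complete fields tangent to $Y$ to complete fields tangent to $Y$ and acts on $\mathfrak{L}$ by Lie algebra automorphisms. Consequently $\alpha_*(I_iV_i) \subseteq \mathfrak{L}$ for every $\alpha \in \aut^Y_{alg}(X)$. Let $M$ be the $\C[X]$-module generated by all of these. It is a submodule of $\mathfrak{L}$, since each $\alpha_*(I_iV_i)$ is already a $\C[X]$-module. By hypothesis (3) and the definition of a generating set with respect to the isotropy group in $\aut^Y_{alg}(X)$, the fields in $M$ span $T_xX$ at the given point $x$, after first using hypothesis (1) to make the ideals nonvanishing at $x$. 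Transitivity of $\aut^Y_{alg}(X)$ on $X\backslash Y$, hypothesis (1), then transports this spanning to every point of $X\backslash Y$. Since $\C[X]$ is Noetherian, $M$ is finitely generated. Its associated coherent sheaf $\mathcal{W}$ has global sections $M \subseteq \mathfrak{L}$, which gives condition (1) of Definition \ref{weakADP}. Spanning at each point of $X \backslash Y$, where $X$ is smooth, means the stalk map onto $\mathcal{T}_{X,x}$ is surjective by Nakayama, which gives condition (2).

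\textbf{Main obstacle.} The delicate point is making the ideals $I_i$ nonvanishing at $x$ and arranging the generating condition, while staying inside $\aut^Y_{alg}(X)$ rather than $\aut_{alg}(X)$. I would handle this as in \cite{MR3320241}. Each $I_i$ has a zero locus that is a proper subvariety, so by transitivity on $X \backslash Y$ we can pick $\alpha_i$ with $\alpha_i^{-1}(x)$ outside it. The isotropy images of $V_i(x)$ are then realized by pushforwards under elements fixing $x$. These steps only use that all the automorphisms involved preserve $Y$, and that is exactly what the hypotheses supply.
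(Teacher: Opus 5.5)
Your overall route is the paper's. The paper's proof only cites the first part of the proof of Theorem 2.2 in \cite{MR3320241}, and that is the construction you reconstruct: modules $J_iV_i\subseteq\mathfrak{L}$ from compatible pairs, saturation under $\aut^Y_{alg}(X)$, Noetherianity, and comparison of stalks with $\mathcal{T}_X$ off $Y$.

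\textbf{The gap.} The step you flag as the main obstacle does not work as written. Write $J_i=W_i(h_i)I_i$ and let $Z(J_i)$ be its zero locus. If you pick $\alpha_i$ with $\alpha_i^{-1}(x)\notin Z(J_i)$, then $(\alpha_i)_*(J_iV_i)$ contributes at $x$ only the line through $d\alpha_i\bigl(V_i(\alpha_i^{-1}(x))\bigr)$. Composing further with elements of the isotropy group of $x$ gives isotropy images of that vector, not of $V_i(x)$. On the other hand, for $\beta$ in the isotropy group you have $\beta_*(J_iV_i)(x)=J_i(x)\,d\beta_x(V_i(x))$, which is zero when $x\in Z(J_i)$. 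So hypothesis (3) is never actually connected to the ideals.

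\textbf{A correct fix.} Let $n=\dim X$. Choose $\beta_1,\dots,\beta_n$ in the isotropy group of $x$ in $\aut^Y_{alg}(X)$ and indices $k_j$ such that the vectors $d\beta_j(V_{k_j}(x))$ form a basis of $T_xX$.
\begin{enumerate}
\item The fields $(\beta_j)_*V_{k_j}$ stay linearly independent on a nonempty Zariski open set $U\ni x$.
\item Since $X$ is irreducible, $U\setminus\bigl(Y\cup\bigcup_j\beta_j(Z(J_{k_j}))\bigr)$ is nonempty. At any $y$ in it, the submodule $\sum_j(\beta_j)_*(J_{k_j}V_{k_j})\subseteq M$ already has full fiber $T_yX$.
\item Your $M$ is $\aut^Y_{alg}(X)$-invariant, so transitivity on $X\backslash Y$ spreads the full fiber from $y$ to every point of $X\backslash Y$.
\end{enumerate}
Hypothesis (1) is therefore needed only at this last point, not to move $x$ off the zero loci.

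\textbf{A smaller point in Step 1.} The field $hgW$ is not complete because ``multiplying by a kernel element preserves completeness'': $h\notin\ker W$. It is complete because $W(hg)=g\,W(h)\in\ker W$, so $W^2(hg)=0$. For complete $W$, any $\phi W$ with $W^2(\phi)=0$ is complete. Indeed, along a $W$-trajectory $\gamma$ one has $\phi(\gamma(s))=a+cs$. The trajectory of $\phi W$ is then $\gamma\bigl(\tfrac{a}{c}(e^{ct}-1)\bigr)$, or $\gamma(at)$ if $c=0$, which is defined for all $t\in\C$.

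With these two repairs your argument goes through.
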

\begin{proof}
    The first part of the proof of \cite{MR3320241}*{Theorem 2.2} produces the desired coherent sheaf of $\C[X]$-submodules of $\vf^Y_{alg}(X)$. 
\end{proof}
\medskip 

We recall the notion of flexibility introduced in \cite{MR3039680}. 
Let $X$ be a smooth affine algebraic variety. 
A derivation $V$ on the ring of regular functions $\C[X]$ is called locally nilpotent (an LND), if for each $f \in \C[X]$ there is a natural number $n$ such that $V^n f = 0$.
An algebraic vector field on $X$ is locally nilpotent, if it as a derivation on $\C[X]$ is locally nilpotent. 
We call $X$ flexible, if locally nilpotent vector fields on $X$ span the tangent space $T_x X$ at every point $x \in X$.

The following proposition is of independent interest (cf.\@ \cite{nTuple}*{Theorem 3} and \cite{MR1829353}).

\begin{proposition} \label{lem: ADP-XxC*}
    Let $X$ be a smooth flexible affine algebraic variety. Then 
    \begin{enumerate}
        \item $X \times \C$ and $X \times \C^*$ have the algebraic density property.
        \item $X \times \C$ has the algebraic density property relative to $X \times \{0\}$. 
    \end{enumerate}
\end{proposition}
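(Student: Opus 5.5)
We will verify the algebraic analogue of Theorem \ref{theorem:KalKutHolomorph}, namely the Kaliman--Kutzschebauch criterion for the algebraic density property. It asks for transitivity of $\aut_{alg}$, compatible pairs of complete algebraic vector fields, and a generating set at one point. For part (2) we use the relative version, the criterion of \cite{MR3320241}*{Theorem 2.2}, with vector fields vanishing on $Y = X\times\{0\}$.

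\textbf{Transitivity.} Let $z$ be the coordinate on $\C$ (respectively $\C^*$). Every LND $\nu$ on $X$ lifts trivially to an LND on $X\times\C$ and on $X\times\C^*$. Flexibility of $X$ implies that the group generated by the flows of LNDs acts transitively on $X$; this is the result of \cite{MR3039680}, using that $X$ is connected, which we may assume since it is smooth and irreducible. For the second factor we use the complete fields $\partial/\partial z$ on $X\times\C$ and $z\,\partial/\partial z$ on $X\times\C^*$. Together these give transitivity on the products. For (2), the fields $z\nu$ (with $\nu$ an LND) and $z\,\partial/\partial z$ are complete and vanish on $X\times\{0\}$: the flow of $z\nu$ is the flow of $\nu$ run at the $z$-dependent time $tz$, since $z$ is a first integral. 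They act transitively on $X\times\C^*$, which is the complement of $Y$.

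\textbf{Compatible pairs.} The natural choice is $V=\nu$ for an LND $\nu$ on $X$, paired with $W=\partial/\partial z$ on $X\times\C$, respectively $W=z\,\partial/\partial z$ on $X\times\C^*$. Then $\ker V\supseteq \ker\nu\otimes\C[z]$ (respectively $\C[z,z^{-1}]$) and $\ker W\supseteq \C[X]$. Hence $\mathrm{Span}(\ker V\cdot\ker W)$ contains $\C[X]\otimes\C[z^{\pm}]$, which is the whole coordinate ring, so it is in particular a nontrivial ideal. The function $h=z\in\ker V$ satisfies $W(h)=1$, respectively $W(h)=z$, and this lies in $\ker W\setminus\{0\}$ on $X\times\C$. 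On $X\times\C^*$, however, $z\notin\ker(z\partial_z)$, so this choice fails there.

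To repair the pairs on $X\times\C^*$ we swap roles. Take $V=z\,\partial/\partial z$ and $W=\nu$, and choose $h\in\ker\nu$... but then $W(h)=0$, so that does not work either. Instead take $V=z\,\partial/\partial z$ and $W=\nu$, and choose $h\in\C[X]$ with $\nu(h)\neq 0$ and $\nu^2(h)=0$; such an $h$ exists for any nonzero LND by taking a local slice. Then $h\in\ker V$ and $W(h)=\nu(h)\in\ker\nu\setminus\{0\}$. Moreover $\ker V\supseteq\C[X]$ and $\ker W\supseteq\C[z^{\pm1}]$, so the span is again everything.

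For (2) we multiply by $z$. Take $V=z\,\nu$ and $W=z\,\partial/\partial z$, with the same kind of $h$ if needed, or take the pair $(z\partial_z,\ z\nu)$. We must check that $\mathrm{Span}(\ker V\cdot\ker W)$ contains a nontrivial ideal, for instance an ideal of the form $z^k\C[X\times\C]$. Here $\ker(z\nu)=\ker\nu\otimes\C[z]$ and $\ker(z\partial_z)=\C[X]$, so the span is all of $\C[X][z]$.

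\textbf{Generating set.} At a point $(x_0,z_0)$ with $z_0\neq 0$, flexibility gives LNDs $\nu_1,\dots,\nu_k$ spanning $T_{x_0}X$. Together with $\partial/\partial z$ (respectively $z\,\partial/\partial z$) the values $\nu_i(x_0)$ span the tangent space of the product. We therefore take the $V_i$ to be the $\nu_i$ (or $z\nu_i$) in pairs as above, plus one pair whose first member is $\partial_z$-type, i.e.\ of the form $(\partial_z,\nu)$. This already gives a spanning set, which is in particular a generating set, so no isotropy argument is needed.

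\textbf{Main obstacle.} The delicate point is exhibiting, for each $V_i$, a partner $W_i$ and an $h$ with $h\in\ker V_i$ and $W_i(h)\in\ker W_i\setminus\{0\}$. For the $V_i=\nu_i$ (or $z\nu_i$) this is done by pairing with $W=z\partial_z$ and using $h=z$ on $X\times\C$. On $X\times\C^*$ we instead use $h=\log$-type functions, which are not available algebraically. The fix there is to take $W=\partial_z$-like fields only on $X\times\C$, and on $X\times\C^*$ to pair $\nu_i$ with $W=z\,\nu_j$ for another LND $\nu_j$ whose local slice lies in $\ker\nu_i$, or to cite \cite{nTuple}*{Theorem 3}. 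I expect this bookkeeping on $X\times\C^*$ to be the hardest step.
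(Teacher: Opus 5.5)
\textbf{Gap: the generating-set condition on $X\times\C^*$ and in part (2).} Your argument for $X\times\C$ is fine. The pairs $(\nu_i,\partial_z)$ with $h=z$, and $(\partial_z,\nu)$ with $h$ a local slice of $\nu$, have first members that really span the tangent space.

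The gap is on $X\times\C^*$ and in part (2). There the only pairs you actually verify are $(z\partial_z,\nu)$ and $(z\partial_z,z\nu)$. Two of your variants fail:
\begin{itemize}
\item $(z\nu,z\partial_z)$ in that order fails, because $z\partial_z$ maps $\C[X][z]$ into $z\C[X][z]$, which meets $\ker(z\partial_z)=\C[X]$ only in $0$.
\item Pairing $\nu_i$ with $z\partial_z$ and $h=z$ also fails, since $z\partial_z(z)=z\notin\ker(z\partial_z)$.
\end{itemize}
So all the first members you have produce the single direction $z\partial_z$. Your claim that the first members span and ``no isotropy argument is needed'' is therefore unsupported exactly where it matters.

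Your proposed repairs do not close this. Consider pairing $\nu_i$ with $z\nu_j$, where $\ker\nu_i$ contains an element of $\nu_j$-degree one. Expanding in powers of $z$ shows this is equivalent to $(\nu_i,\nu_j)$ being a compatible pair on $X$ itself, with the $\nu_i(x_0)$ spanning $T_{x_0}X$. Since $X$ is homogeneous, the same criterion would then give the algebraic density property of $X$ itself, and flexibility is not known to provide this. Also, \cite{nTuple}*{Theorem 3} concerns $X\times\C$, not $X\times\C^*$.

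\textbf{The missing idea: use the isotropy group.} The criterion only asks for a generating set with respect to the isotropy group, and the single vector $z\partial_z$ at $(x_0,1)$ already is one. The field $(z-1)\nu_i$ is complete. Its time-one map $(x,z)\mapsto(\phi^{\nu_i}_{z-1}(x),z)$ fixes $(x_0,1)$. Its differential there is the identity on $T_{x_0}X$ and sends $z\partial_z|_{(x_0,1)}$ to $z\partial_z|_{(x_0,1)}+\nu_i(x_0)$. By flexibility, choose $\nu_1,\dots,\nu_n$ with $\nu_1(x_0),\dots,\nu_n(x_0)$ spanning $T_{x_0}X$. Then these images, together with $z\partial_z$, span $T_{(x_0,1)}$.

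For part (2), use $z(z-1)\nu_i$ instead. It vanishes on $X\times\{0\}$ and has the same differential at $(x_0,1)$. With this, the pairs $(z\partial_z,\nu)$ on $X\times\C^*$ and $(z\partial_z,z\nu)$ on $X\times\C$ suffice, together with your transitivity arguments. The latter pair produces $\C[X\times\C]\,\nu(h)\,z^2\partial_z$ in the Lie algebra. One then applies \cite{MR2385667}*{Theorem 1} and \cite{MR3320241}*{Theorem 2.2}; this is exactly the paper's proof.
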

\begin{proof}
    Denote by $z$ the coordinate on $\C$. 

    (1) The proof for the algebraic density property of $X \times \C$ is the same as \cite{nTuple}*{Theorem 3}. Here we give details for $X \times \C^*$.

    Fix $x_0 \in X$. Since $X$ is flexible, there exist LNDs $V_1, V_2, \dots, V_n$ on $X$ spanning the tangent space $T_{x_0} X$. For each $V_i$, the time-$1$ flow of $(z-1)V_i$ fixes $(x_0,1)$ and its differential at this point is an endomorphism of the tangent space: (cf.\@ \cite{nTuple}*{Lemma 3.1})
    \begin{align*}
        z \frac{\partial }{\partial z} \Big\rvert_{z=1} \mapsto z \frac{\partial }{\partial z}\Big\rvert_{z=1} +  V_i(x_0), \quad i = 1, \dots, n.
    \end{align*}
    Together, they form a basis of the tangent space $T_{(x_0,1)}(X \times \C^*)$, and thus $z \partial / \partial z$ is a generating set at $(x_0, 1)$. 

    For any nonzero LND $V$ on $X$, there exists an $h \in \C[X]$ such that $V(h) \in \ker V\backslash \{0\}$. Hence, $(V, z \partial / \partial z)$ is compatible for $i=1, \dots, n$.
    Moreover, $X \times \C^*$ is homogeneous with respect to its algebraic automorphism group, since $X$ is homogeneous thanks to its flexibility \cite{MR3039680} and products of homogeneous spaces are homogeneous. The claim now follows from \cite{MR2385667}*{Theorem 1}.
    \smallskip

    (2) Let $V_1, V_2, \dots, V_n$ be LNDs spanning the tangent space $T_{x_0} X$. As above, the time-$1$ flow of $z (z-1)V_i$ fixes $(x_0,1)$ and its differential at this point is an endomorphism of the tangent space: 
    \begin{align*}
        z \frac{\partial }{\partial z} \Big\rvert_{z=1} \mapsto z \frac{\partial }{\partial z}\Big\rvert_{z=1} +  V_i(x_0), \quad i = 1, \dots, n.
    \end{align*}
    Together, they form a basis of the tangent space $T_{(x_0,1)}(X \times \C^*)$, and thus $z \partial / \partial z$ is a generating set at $(x_0, 1)$. 

    For any nonzero LND $V$ on $X$, consider the pair of complete vector fields $(zV, z \partial / \partial z)$ on $X \times \C$. Since $z^k \in \ker zV$ for any $k \in \N$ and $\C [X] \subseteq \ker z\partial/\partial z$, we have 
    $$
        \C[X \times \C] \subseteq \mathrm{Span} \left(\ker zV \cdot \ker z\frac{\partial}{ \partial z} \right).
    $$ 
    Thus we have a compatible pair $(zV, z \frac{\partial}{\partial z})$ for $X \times \C$. More precisely, the $\C[X \times \C]$-submodule generated by this pair is 
    \begin{align*}
        \C[X \times \C] V(h) z^2 \frac{\partial}{\partial z},
    \end{align*}
    where $h \in \C[X]$ satisfies $V(h) \in \ker V\backslash \{0\}$. At $(x_0,1)$, the fiber of this submodule contains the generating set $z \partial / \partial z$.

    Moreover, the group $\mathrm{Aut}_{alg}(X \times \C, X \times \{0\})$ of algebraic automorphisms of $X \times \C$ fixing $X \times \{0\}$ acts transitively on $X \times \C^*$, since for a fixed $z \in \C^*$, the transitivity along the fiber follows from the flexibility of $X$ and along the $z$ direction, the complex flow of $z \partial / \partial z$ has orbit $\C^*$.

    The claim then follows from \cite{MR3320241}*{Theorem 2.2}. 
\end{proof}

Combining Theorem \ref{thm: typeII} and Proposition \ref{lem: ADP-XxC*}, we get the following.

\begin{theorem} \label{example second kind}
    Let $X$ be a smooth flexible affine algebraic variety. Then, there exists a Fatou-Bieberbach domain of the second kind in $X \times \C$ which is contained in $X \times \C^*$, Runge in $X \times \C^*$ but non-Runge in $X \times \C$.
\end{theorem}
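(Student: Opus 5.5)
We apply Theorem \ref{thm: typeII} with the ambient manifold $X \times \C$ and the hypersurface $H := X \times \{0\}$, so that $(X\times\C)\backslash H = X \times \C^*$. It suffices to check the three hypotheses of that theorem. $X \times \C$ is Stein because it is smooth affine, and $H$ is a smooth closed complex hypersurface.

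\textbf{Hypothesis (1).} $X \times \C^*$ is a smooth affine variety, hence Stein. By Proposition \ref{lem: ADP-XxC*}(1) it has the algebraic density property. Algebraic vector fields approximate holomorphic ones on compact sets, so $X\times\C^*$ has the density property.

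\textbf{Hypothesis (2).} By Proposition \ref{lem: ADP-XxC*}(2), $X \times \C$ has the algebraic density property relative to $H$. Holomorphic vector fields vanishing on $H$ can be approximated by algebraic ones vanishing on $H$ (cf.\ \cite{MR3320241}*{Proposition 6.2}). Hence $(X\times\C, H)$ has the relative density property in the sense of Definition \ref{def:relDP}, which implies the weak density property tangent to $H$.

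\textbf{Hypothesis (3): Property (PO).} This is the step requiring an actual construction. Let $K \subseteq X\times \C$ be compact, so $K \subseteq X \times \{|z| \le R\}$ for some $R$. We use the translation flow $\alpha_t(x,z) = (x, z + 2Rt)$, which is the flow of the complete vector field $2R\,\partial/\partial z$. Then $\alpha_0 = Id$ and $\alpha_1(H) = X \times \{2R\}$. This set is disjoint from $K$, since every point of $K$ has $|z|\le R$. So Property (PO) holds. No deeper argument is needed here because $H$ is a fiber of the trivial $\C$-direction.

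With all three hypotheses verified, Theorem \ref{thm: typeII} yields a Fatou-Bieberbach domain of the second kind in $X \times \C$ contained in $X\times\C^*$, Runge in $X\times\C^*$ and non-Runge in $X\times\C$.

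The main care point is hypothesis (2): we must pass correctly from the algebraic relative density property to the holomorphic weak density property. That is, the closure of $\lie_{hol}(X\times\C,H)$ must contain $I_H^\ell\vf_{hol}(X\times\C,H)$. The coherent sheaf $I_H^{\ell}\mathcal{T}_{X\times\C}$ then serves as $\mathcal{W}$ in Definition \ref{weakDP}. Its stalks agree with those of $\mathcal{T}_{X\times\C}$ off $H$, and its sections are tangent to $H$ because they vanish on $H$.
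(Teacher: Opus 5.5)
Your proof is correct and is essentially the paper's argument: Proposition \ref{lem: ADP-XxC*} gives the density property of $X\times\C^*$ and the relative density property of $(X\times\C, X\times\{0\})$, which implies the weak one, and translation in the $\C$-factor gives Property (PO). One minor slip in your last paragraph: fields vanishing on $H$ are exactly $z\,\vf_{hol}(X\times\C)$, so the sheaf witnessing the weak density property should be $I_H^{\ell+1}\mathcal{T}_{X\times\C}$ rather than $I_H^{\ell}\mathcal{T}_{X\times\C}$ (which for $\ell=0$ is not even tangent to $H$); also, in the (PO) step you should take $R>0$.
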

\begin{proof}
By Proposition \ref{lem: ADP-XxC*}, the complement $X \times \C^*$ of $X \times \{0\}$ in $X\times \C$ has the density property and $X \times \C$ has the density property relative to $X \times \{0\}$. Taking translation along $\C$, the last condition of Theorem \ref{thm: typeII} is also satisfied.
\end{proof}

\begin{remark} \label{final}
    (1) Since the conditions of Theorem \ref{thm: typeI} are part of those of Theorem \ref{thm: typeII}, there is a non-Runge Fatou-Bieberbach domain of the first kind in $X \times \C$ when $X$ is a smooth flexible affine algebraic variety. 

    (2) The Calogero-Moser space $CM$ is the product of $X \times \C^2$ for a smooth affine subvariety $X$, see Andrist-Huang \cite{MR4986786}. It can be shown that $X$ is flexible, thus by Theorem \ref{example second kind}, $CM$ contains a non-Runge Fatou-Bieberbach domain which is biholomorphic to $CM$. 
\end{remark}

\subsection{A topological observation}
It seems much more difficult to find examples satisfying the conditions of Theorem \ref{thm: typeII}. Not only we need the weak density property of $(X, H)$, but also Property (PO). There could be topological obstructions to Property (PO), which are competing with the density property of the complement $X \backslash H$. 

Consider the following example:
\begin{align*}
    X = SL_2(\C) = \left\{ \begin{pmatrix}
        a & b \\ c & d
    \end{pmatrix}:\ ad-bc=1 \right\}, 
    \quad 
    H=\{x\in X:\ a=0\}.
\end{align*} 
Then, the complement $SL_2(\C) \backslash H \cong \C_a^* \times \C_{b,c}^2$, which has the density property. Furthermore, one can verify the weak density property of $(SL_2(\C), H)$. However, there is  a topological obstruction to Property (PO) for $(SL_2(\C), H)$. Suppose one can move the hypersurface $H$ away from the compact set $SU(2) \subseteq SL_2(\C)$ by an automorphism $\alpha$, then the following inclusions of topological spaces
\[
    SU(2) \overset{i_1}{\hookrightarrow} SL_2(\C) \backslash \alpha(H) \overset{i_2}{\hookrightarrow} SL_2(\C)
\]
induce the following inclusions of their third homotopy groups
\[
    \pi_3(SU(2))  \overset{(i_{1})_*}{\longrightarrow} \pi_3(SL_2(\C) \backslash \alpha(H))  \overset{(i_{2})_*}{\longrightarrow} \pi_3(SL_2(\C)).
\]
Since $SU(2)$ is diffeomorphic to $S^3$, $SL_2(\C)$ deformation retracts to $SU(2)$ and the complement $SL_2(\C) \backslash \alpha(H)$ is biholomorphic to $ SL_2(\C) \backslash H$, we get 
\[
    \Z \overset{(i_{1})_*}{\longrightarrow}  0 \overset{(i_{2})_*}{\longrightarrow}  \Z.
\]
This contradicts the fact that $(i_2 \circ i_1)_* = Id$. 

Of course one can find hypersurfaces in $SL_2(\C)$ which do not intersect $SU(2)$. For instance, by the polynomial convexity of $SU(2) \subseteq \C^4_{a,b,c,d}$ and the Oka-Weil theorem, there is a polynomial $p$ on $SL_2(\C)$ which approximates the constant 1 function on $SU(2)$, 
and thus $H:=\{ p=0\}$ is disjoint from $SU(2)$. However, such $p$ will have high degree.
On the other hand, complements of hypersurfaces of high degree are generically Kobayashi hyperbolic (see e.g.\@ \cite{MR3425387}) and thus cannot have the density property. In other words,
hypersurfaces defined by polynomials of small degree have better chances for the density properties to hold, whereas small degree may cause topological obstructions for having Property (PO). These two competing phenomena make the
search for examples of non-Runge Fatou-Bieberbach domains of the second kind a challenging task. 
\medskip 

For another choice of hypersurface for $SL_2(\C)$, we have not spotted any topological obstruction to Property (PO). In addition we can prove the other ingredients of Theorem \ref{thm: typeII}. Therefore, we conclude with a natural question. 
\begin{problem}
    Let $H=\{ A \in SL_n(\C):\ \tr A=0\}$. Does there exist a Fatou-Bieberbach domain of the second kind (biholomorphic to $SL_n(\C)$) in $SL_n(\C) \backslash H$?
\end{problem}

\end{document}